\numberwithin{equation}{section} 
\def\@begintheorem#1#2{\par\bgroup{\scshape #1\ #2. }\it\ignorespaces} 
\def\@opargbegintheorem#1#2#3{\par\bgroup%
	{\scshape #1\ #2\ ({\upshape #3}). }\it\ignorespaces} 
\def\@endtheorem{\egroup}
\newtheorem{theorem}{Theorem} 
\newtheorem{lemma}[theorem]{Lemma} 
\newtheorem{corollary}[theorem]{Corollary} 
\newtheorem{proposition}[theorem]{Proposition} 
\newtheorem{definition}[theorem]{Definition} 
\newtheorem{example}[theorem]{Example} 
\newtheorem{remark}[theorem]{Remark} 
\newtheorem{homework}[theorem]{Homework} 
\newtheorem{case}[theorem]{} 
\newtheorem{theorem}{Theorem}[section]
\newtheorem{proposition}[theorem]{Proposition} 
\newtheorem{definition}[theorem]{Definition}
\begin{document} 
	\bibliographystyle{unsrt} 
\begin{frontmatter} 
\title{Optimal Stirring Strategies for Passive Scalars in a Domain with a General Shape and No-Flux Boundary Condition} 
\cortext[cor1]{Corresponding author} 
 
\author[1]{Sirui Zhu} 
\author[1]{Zhi Lin} 
\author[1]{Liang Li} 
\author[2]{Lingyun Ding\corref{cor1}} 
\ead{dingly@g.ucla.edu} 
 
\address[1]{School of Mathematical Sciences, Zhejiang University, Hangzhou, Zhejiang 310027, China} 
\address[2]{Department of Mathematics, University of California Los Angeles, Los Angeles, CA, 90095, United States} 
		 
\begin{abstract} 
Multiscale metrics such as negative Sobolev norms are effective for quantifying the degree of mixedness of a passive scalar field advected by an incompressible flow in the absence of diffusion. In this paper we introduce a mix norm that is motivated by Sobolev norm $H^{-1}$ for a general domain with a no-flux boundary. We then derive an explicit expression for the optimal flow that maximizes the instantaneous decay rate of the mix norm under fixed energy and enstrophy constraints. Numerical simulations indicate that the mix norm decays exponentially or faster for various initial conditions and geometries and the rate is closely related to the smallest non-zero eigenvalue of the Laplace operator. These results generalize previous findings restricted for a periodic domain for its analytical and numerical simplicity. Additionally, we observe that periodic boundaries tend to induce a faster decay in mix norm compared to no-flux conditions under the fixed energy constraint, while the comparison is reversed for the fixed enstrophy constraint. In the special case of even initial distributions, two types of boundary conditions yield the same optimal flow and mix norm decay.
  
\end{abstract} 
\begin{keyword} Mixing\sep  Flow control and optimization\sep  Passive scalar  \end{keyword} 
\end{frontmatter} 
	 
\section{Introduction} 
Mixing of scalars (mass, heat, energy, etc.) by fluid flows is a fundamental phenomenon in a wide variety of natural and industrial fluidic systems \cite{aref2017frontiers,stroock2002chaotic,ding2023shear}. The mixing process can be divided into two distinct stages \cite{Danckwerts1952,Eckart1948,Batchelor_1959}: Flow advection is dominant at first transforming large-scale structures in the initial field into small-scale filaments. As the filament size reduces to the Batchelor scale, molecular diffusion takes over and homogenizes the field by smoothing out any small-scale variances. 
 
It is noteworthy that without advection, achieving uniform homogenization solely through molecular diffusion takes a considerable amount of time even in small regions.  Flow-enhanced mixing has therefore attracted significant research interests for theoretical and practical purposes \cite{stroock2002chaotic,dutta2001dispersion,aminian2016boundaries,ding2021enhanced,ding2022ergodicity}. It is then natural to ask how one should measure the degree of mixing and consequently, what the optimal flow is for various settings.  Spatial variance is a traditional and intuitive candidate to quantify mixing.  However, without diffusion the variance remains constant over time and therefore is unable to characterize mixing dynamics.  By contrast, negative Sobolev norms have gained notable recognition as effective mixing measures in recent years for their guaranteed temporal decay in the ergodic sense even in the absence of diffusion \cite{mathew2005multiscale,mathew2007optimal,thiffeault2012using,Hu2018b,Hu_boundary_2018,HuandZheng2021,Thiffeault_wall_2011, Hassanzadeh2014}. 
 
In particular, \cite{thiffeault2012using, lunasin2012optimal,lin2011optimal} proposed a framework to derive optimal stirring strategies in a multiple-periodic rectangle subject to different flow constraints.  In this setting, the governing advection equation for the scalar field can be readily manipulated into a local-in-time optimization problem for the instantaneous decay rate of the $H^{-1}$ scalar norm.  Consequently, optimal stirring flows were analytically derived and numerically simulated afforded by simplistic Fourier representations.  In more general scenarios, such as impermeable boundaries and non-rectangular domains, the applicability of these methods and results requires further investigation.  

For this reason, in this paper we seek to extend the work in \cite{thiffeault2012using, lunasin2012optimal,lin2011optimal} to more diverse geometric shapes and to no-flux boundary conditions.  While the general framework is inherited here, several theoretical and computational challenges arise and will be addressed, including a proper re-definition of the mix norm, utilizing the Leray-Helmholtz projector to enforce the no-flux condition and to derive explicit optimal stirring flows, as well as robust simulation schemes for general geometries.

The rest of the paper is organized as follows: Section \ref{sec:Problem formulation} surveys various aspects in the formulation of the optimal stirring problem including the governing equation. In Section \ref{sec:Optimal stirring strategy}, we analyze the lower bounds of mixing rates and present the explicit formula for the optimal stirring flow. Section \ref{sec:Numerical simulation} showcases the numerical simulation of the scalar field evolution under the optimal flow for various initial conditions and domains. Additionally, we compare the optimal flow's performance under no-flux and periodic boundary conditions and discuss the impact of different types of boundary conditions on mixing efficiency. Finally, Section \ref{sec:Discussion} concludes the paper by summarizing the main contributions and suggesting potential avenues for future research.

\section{Preliminaries}\label{sec:Problem formulation} 
In this section, we will introduce some key ingredients to the optimization problem we consider, including the governing equation, the cost function as the instantaneous decay rate of a mix norm, the flow constraints and the projection operator necessary for solving the problem with a no-flux boundary.  
 
\subsection{Governing equation for a stirred passive scalar} 
The evolution of a passive scalar field $\theta(\mathbf{x},t)$ stirred by a prescribed flow field $\mathbf{u}(\mathbf{x},t)$ in a bounded domain $\Omega \subset \mathbb{R}^{d}$ is governed by the following advection equation 
\begin{equation}\label{eq:advection equation} 
\begin{aligned} 
\partial_{t}\theta+\mathbf{u}\cdot\nabla\theta=0, \quad\mathbf{x}\in\Omega,\quad  t>0, 
\end{aligned} 
\end{equation} 
with initial condition $\theta(\mathbf{x},0)=\theta_{0} (\mathbf{x})$.   The flow $\mathbf{u}(\mathbf{x},t)$ satisfies the conditions 
\begin{equation}\label{BC} 
\nabla\cdot\mathbf{u}=0, \quad    \left. \mathbf{u} \cdot \mathbf{n} \right|_{\mathbf{x}\in \partial\Omega }=0,  
\end{equation} 
indicating incompressibility and a no-flux boundary where $\mathbf{n}$ denotes the unit outer normal vector of the domain boundary $\partial\Omega$.  Under these settings, the spatial mean of the scalar is conserved at all times and therefore we assume that it vanishes without loss of generality.  That is, 
\begin{equation} 
\overline{\theta}(\mathbf{x},\cdot)=\overline{\theta_{0} }(\mathbf{x})=\frac{1}{|\Omega|} \int_\Omega \theta (\mathbf{x},\cdot) \mathrm{d} \mathbf{x}=0. 
\end{equation}

\subsection{Mix norm  as an equivalent negative Sobolev norm} 
We now introduce a mix norm motivated by negative Sobolev norms used in similar problems with periodic boundaries \cite{mathew2005multiscale, Iyer2013, lin2011optimal}. The Sobolev space $W^{q, p}(\Omega)$ on an open set $\Omega \subset \mathbb{R}^{d}$ for $q\geq 0$ and $1\leq p<\infty$ consists of functions $f$ in $L^{p}(\Omega)$ whose weak partial derivatives $D^{\alpha}f$ up to order $q$ have finite $L^{p}(\Omega)$ norms. The norm on $W^{q, p}(\Omega)$ is given by 
\begin{equation} 
\Vert f\Vert_{W^{q, p}(\Omega)}:=\left(\frac{1}{|\Omega|}\sum_{|\alpha|\leq q} \Vert D^{\alpha}f\Vert_{L^p}^p \right)^{1/p}. 
\end{equation} 
In the case where $p=2$, we denote this Sobolev space as $H^{q}(\Omega):=W^{q, 2}(\Omega)$.  The negative Sobolev space $H^{-q}$ is defined as the dual space of  the positive Sobolev space $H^{q}$. The norm on $H^{-q}$ is defined in terms of the dual space norm as follows: 
\begin{equation}\label{eq:H-q} 
\Vert f\Vert_{H^{-q}}=\Vert f\Vert_{H^{q*}}=\sup_{g\in H^{q}}\frac{\langle f, g\rangle}{\Vert g\Vert_{H^{q}}}, 
\end{equation} 
where $\langle f, g\rangle:=\dfrac{1}{|\Omega|}\displaystyle\int_{\Omega}f(\mathbf{x})g(\mathbf{x})\mathrm{d}\mathbf{x}$ denotes the inner product of two scalar functions.

It is straightforward that without the presence of diffusion, the spatial variance of a mean-zero scalar field, which is the $L^2$ norm, remains constant and would not vanish in time to signify that mixing occurs \cite{thiffeault2012using, Plasting_young_2006,zelati2023mixing}.  Instead of requiring the scalar field to have zero variance to be well-mixed, indicating a strong, pointwise convergence towards a constant, one can define well-mixedness in the weak sense as the following: 
 
\begin{definition} 
A scalar field $\theta$ is well-mixed if  $\dfrac{1}{|\Omega|}\displaystyle\int\limits_{\Omega}^{} f (\mathbf{x}) \theta (\mathbf{x}) \mathrm{d} \mathbf{x}= \overline{\theta}(\mathbf{x}), \forall f \in L^{2} (\Omega) $.    
\end{definition} 
 
Then the negative Sobolev norms are reasonable metrics for mixing justified by the following theorem: 
\begin{theorem}\label{thm:weak convergence negative Sobolev space} 
  Suppose the spatially mean-zero function $\theta$ is bounded uniformly in $L^{2} (\Omega)$, then $\lVert \theta \rVert_{H^{-q}}=0$ ($q>0$) if and only if $\dfrac{1}{|\Omega|}\displaystyle\int\limits_{\Omega}^{} f (\mathbf{x}) \theta (\mathbf{x}) \mathrm{d} \mathbf{x}= 0$ for all $f \in L^{2} (\Omega) $.   
\end{theorem}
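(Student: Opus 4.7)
The plan is to prove the two directions separately, using only the duality definition~\eqref{eq:H-q} of the negative Sobolev norm together with the density of $H^{q}(\Omega)$ in $L^{2}(\Omega)$.

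The ``$\Leftarrow$'' direction is essentially immediate. If $\langle \theta, f\rangle = 0$ for every $f \in L^{2}(\Omega)$, then in particular the inner product vanishes on the smaller space $H^{q}(\Omega) \subset L^{2}(\Omega)$, so the supremum in~\eqref{eq:H-q} is zero and $\|\theta\|_{H^{-q}} = 0$.

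For the ``$\Rightarrow$'' direction, I would start from the assumption $\|\theta\|_{H^{-q}} = 0$; by the supremum in~\eqref{eq:H-q} this forces $\langle \theta, g\rangle = 0$ for every $g \in H^{q}(\Omega)$. To extend the vanishing to an arbitrary $f \in L^{2}(\Omega)$, I would invoke the density of $H^{q}(\Omega)$ in $L^{2}(\Omega)$, which follows from the inclusion $C_{c}^{\infty}(\Omega) \subset H^{q}(\Omega)$ and the standard fact that $C_{c}^{\infty}(\Omega)$ is dense in $L^{2}(\Omega)$ for any bounded open $\Omega$. Choose a sequence $g_{n} \in H^{q}(\Omega)$ with $g_{n} \to f$ in $L^{2}(\Omega)$; Cauchy--Schwarz then gives
\[
|\langle \theta, f - g_{n}\rangle| \;\le\; \frac{1}{|\Omega|}\,\|\theta\|_{L^{2}}\,\|f - g_{n}\|_{L^{2}},
\]
and the right-hand side tends to zero because the hypothesis ``$\theta$ uniformly bounded in $L^{2}$'' supplies a finite bound on $\|\theta\|_{L^{2}}$. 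Since $\langle \theta, g_{n}\rangle = 0$ for every $n$, the limit $\langle \theta, f\rangle$ also vanishes, completing the argument.

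There is no real obstacle here beyond clarifying the density claim; the only role of the $L^{2}$ boundedness hypothesis is to make $\langle \theta, \cdot\rangle$ a continuous linear functional on $L^{2}(\Omega)$, so that its vanishing on the dense subspace $H^{q}(\Omega)$ forces it to vanish on all of $L^{2}(\Omega)$. The overall structure is a completely standard duality-plus-density argument, and the specific value of $q > 0$ plays no role beyond ensuring $H^{q}(\Omega) \hookrightarrow L^{2}(\Omega)$ densely.
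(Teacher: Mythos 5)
The paper states Theorem~\ref{thm:weak convergence negative Sobolev space} without proof (only the equivalence of the mix norm and the $H^{-1}$ norm is proved, in Appendix~\ref{sec:Equivalence}), so there is no in-paper argument to compare against; your duality-plus-density argument is correct and is the standard proof of this kind of statement. Both directions check out: the ``$\Leftarrow$'' direction is immediate from $H^{q}(\Omega)\subset L^{2}(\Omega)$, and in the ``$\Rightarrow$'' direction the only point worth a clause is that the supremum in~\eqref{eq:H-q} carries no absolute value, so you should test with both $g$ and $-g$ to conclude $\langle\theta,g\rangle=0$ for all $g\in H^{q}$ before invoking density of $C_c^{\infty}(\Omega)$ in $L^{2}(\Omega)$. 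The one substantive remark is that, read literally for a single fixed $\theta\in L^{2}(\Omega)$, both sides of the equivalence are each equivalent to $\theta=0$ a.e.\ (take $f=\theta$ on the right-hand side), so the statement as written is degenerate and, as you correctly observe, the hypothesis does no work beyond placing $\theta$ in $L^{2}$. The version the theorem is evidently meant to capture --- and the one appearing in the cited references --- is asymptotic: for a family $\theta(\cdot,t)$ with $\sup_{t}\Vert\theta(\cdot,t)\Vert_{L^{2}}<\infty$, one has $\Vert\theta(\cdot,t)\Vert_{H^{-q}}\to 0$ if and only if $\langle\theta(\cdot,t),f\rangle\to 0$ for every $f\in L^{2}(\Omega)$, i.e.\ weak convergence to the mean. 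Your argument transfers verbatim to that setting, and there the uniform-in-$t$ bound is genuinely needed: it is exactly what makes the estimate $\vert\langle\theta(\cdot,t),f-g_{n}\rangle\vert\leq \Vert\theta(\cdot,t)\Vert_{L^{2}}\Vert f-g_{n}\Vert_{L^{2}}/\vert\Omega\vert$ uniform in $t$, so that vanishing of the limit on the dense subspace $H^{q}$ upgrades to vanishing on all of $L^{2}$. It would strengthen your write-up to state and prove that asymptotic version explicitly.
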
 
and the fact that even in the absence of diffusion, $\lVert \theta \rVert_{H^{-q}}$ can have a vanishing long-time limit \cite{mathew2005multiscale, lin2011optimal}.

To further facilitate the analytical and numerical investigation conducted in subsequent sections, we define a mix norm equivalent to the $H^{-q}$ norm \eqref{eq:H-q} with $q=1$, in the sense that the two norms induce the same topology on the given domain.  
\begin{definition}[Mix norm]\label{def:mix norm} 
    Let $\Omega \subset \mathbb{R}^d$ be a bounded  domain. For a given scalar function $\theta \in L^2(\Omega)$, the mix norm is defined as 
	\begin{equation}\label{norm} 
	\Vert \theta \Vert_{m}=\Vert\nabla\Delta^{-1}\theta\Vert_{L^2}=\Vert\nabla\varphi\Vert_{L^2}, 
	\end{equation} 
where $\varphi=\Delta^{-1}\theta$ is a solution of the Poisson equation  
\begin{equation}\label{eq:laplace no flux} 
\begin{aligned} 
\Delta\varphi&=\theta \quad \text{in}~~\Omega, \quad  
\left. \nabla\varphi\cdot\mathbf{n} \right|_{\mathbf{x} \in \partial \Omega }=0. 
\end{aligned} 
\end{equation} 
\end{definition} 
The proof of the equivalence of the mix norm and $H^{-1}$ norm are presented in Appendix \ref{sec:Equivalence} and consequently, Theorem \ref{thm:weak convergence negative Sobolev space} also applies to $\Vert \theta \Vert_{m}$. 
 
\subsection{Flow constraints and cost function} 
To formulate an optimization problem, constraints must be applied to the available flow fields. In this paper, we consider two types of constraints: fixed energy constraint and fixed enstrophy constraint. Mathematically, the fixed-energy constrained flow satisfies 
\begin{equation}\label{eq:fixed energy} 
\int_{\Omega}\vert\mathbf{u}(\mathbf{x},t)\vert^2 \mathrm{d}\mathbf{x}=U^2\vert\Omega\vert 
\end{equation} 
for all time, where $U$ represents a given, constant root-mean-square velocity. This constraint is not only natural from a mathematical standpoint but is also relevant in applications where stirring strength is characterized by the kinetic energy density in the flow. 
 
While the fixed energy constraint restricts only the magnitude of the velocity fields, it does not limit the magnitude of the velocity gradient. A reasonable and meaningful constraint that limits the velocity gradient is the fixed enstrophy constraint. Enstrophy is defined as 
\begin{equation} 
\mathcal{E} (\mathbf{u}) =\int_{\Omega} \lVert \nabla \mathbf{u}\rVert_{F}^{2}\mathrm{d}\mathbf{x}=\int_{\Omega} \sum_{i, j=1}^{d}\left(\partial_{x_{j}} u_{i} \right)^{2}\mathrm{d}\mathbf{x}, 
\end{equation} 
where $\mathbf{u}= (u_{1},u_{2},...,u_{d})$ and $\mathbf{x}= (x_{1},x_{2},...,x_{d})$.  The gradient of a vector is defined as the following matrix  $(\nabla \mathbf{u})_{i,j}= \partial_{x_{i}} u_{j}$. $||A ||_{F}$ represents the Frobenius norm of the matrix $A$.  Enstrophy is a measure of the magnitude of vorticity and  is directly related to  dissipation effects in the fluid. In the context of the incompressible Navier-Stokes equations, the enstrophy is the time derivative of the flow energy, $\frac{\mathrm{d}}{\mathrm{d} t} \left( \int_{\Omega}\vert\mathbf{u}(\mathbf{x},t)\vert^2 \mathrm{d}\mathbf{x} \right)=-\nu \mathcal{E} (\mathbf{u})$, 
where $\nu$ is the kinematic viscosity. Due to this property,  the fixed-enstrophy constraint is natural for engineering applications where the focus is on overcoming viscous dissipation to maintain stirring. The constraint is defined as 
\begin{equation}\label{eq:fixed enstrophy} 
\mathcal{E} (\mathbf{u})=\frac{\vert\Omega\vert}{\tau^2}, 
\end{equation} 
where $\tau^{-1}$ represents the rate of strain of the stirring. 
 
Given either of these flow constraints, two types of optimization problems, termed as global- or local-in-time respectively, can be formulated and different optimal stirring strategies ensue \cite{mathew2007optimal,lin2011optimal}.  The distinction arises depending on whether the cost function to be optimized keeps track of the complete evolutionary history of the system.  In this paper, we focus on the local-in-time optimization following the work in \cite{lin2011optimal,  lunasin2012optimal}. In particular, with a given snapshot of the scalar field $\theta (\mathbf{x},t)$,  we seek an incompressible flow subject to the no-flux boundary condition and constraints \eqref{eq:fixed energy} or \eqref{eq:fixed enstrophy}, such that the instantaneous decay rate of the scalar mix norm, $\mathrm{d}\|\theta\|_{m}^{2}/\mathrm{d}t$, is maximized.  As we will see in Section \ref{sec:Optimal stirring strategy},  we can easily rewrite the advection equation (\ref{eq:advection equation}) to represent the chosen cost function as a linear functional of the control stirring $\mathbf{u}$ which is subsequently optimized with constraints.

\subsection{Leray-Helmholtz projection operator} 
Leray-Helmholtz projection operator projects a velocity field to an incompressible velocity field that adheres to no-flux boundary conditions. It is useful in analysis since we focus on the incompressible velocity field. This section summarizes some properties of this operator.  The projection operator is defined as follows 
\begin{equation}\label{eq:Leray-Helmholtz projection} 
\mathscr{P}(\mathbf{v})=\mathbf{v}-\nabla p, 
\end{equation} 
where $p$ is a solution to the Poisson equation subject to a Neumann boundary condition: 
\begin{equation}\label{eq:Leray-Helmholtz projection p} 
\begin{aligned} 
\Delta p&=\nabla\cdot\mathbf{v}\quad\text{in}~~\Omega,\quad \mathbf{n}\cdot\nabla p=\mathbf{n}\cdot\mathbf{v}\quad\text{on}~~\partial\Omega. 
\end{aligned} 
\end{equation} 
This projection  allows us to decompose the flow field $\mathbf{v}$ into a divergence-free part $\mathscr{P} (\mathbf{v})$ that adheres to no-flux boundary conditions and curl free part $\mathbf{v}-\mathscr{P} (\mathbf{v})$. In addition, we have the following proposition.

\begin{proposition}\label{pro:orthogonal divergence free} 
 The curl free part $\mathbf{v}-\mathscr{P} (\mathbf{v})$ in equation \eqref{eq:Leray-Helmholtz projection} is orthogonal to any divergence-free vector field $\mathbf{u}$ subject to no-flux boundary conditions, $\left\langle \mathbf{u},  \mathbf{v}-\mathscr{P} (\mathbf{v})  \right\rangle=0$. 
\end{proposition}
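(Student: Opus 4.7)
The plan is to reduce the inner product $\langle \mathbf{u}, \mathbf{v} - \mathscr{P}(\mathbf{v})\rangle$ to a boundary integral plus a volume integral via integration by parts, and then show that both vanish thanks to the two hypotheses on $\mathbf{u}$ (divergence-free and no-flux).

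First I would use the definition of $\mathscr{P}$ in \eqref{eq:Leray-Helmholtz projection} to rewrite the curl-free part as a pure gradient: $\mathbf{v} - \mathscr{P}(\mathbf{v}) = \nabla p$, where $p$ solves the Neumann problem \eqref{eq:Leray-Helmholtz projection p}. Then the claim becomes $\langle \mathbf{u}, \nabla p\rangle = 0$. Next, I would apply the divergence theorem to the vector field $p\mathbf{u}$, yielding
\begin{equation*}
\int_\Omega \mathbf{u}\cdot\nabla p\, \mathrm{d}\mathbf{x} = \int_{\partial\Omega} p\, (\mathbf{u}\cdot\mathbf{n})\, \mathrm{d}S - \int_\Omega p\, (\nabla\cdot\mathbf{u})\, \mathrm{d}\mathbf{x}.
\end{equation*}
The boundary term vanishes because $\mathbf{u}\cdot\mathbf{n} = 0$ on $\partial\Omega$ by assumption, and the volume term vanishes because $\nabla\cdot\mathbf{u} = 0$ in $\Omega$. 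Dividing by $|\Omega|$ to match the normalization in the definition of $\langle\cdot,\cdot\rangle$ then gives the result.

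There is no real obstacle here; the only subtlety worth flagging is the regularity needed for integration by parts. One should assume enough smoothness (or at least $\mathbf{u}\in H^1(\Omega)$ and $p\in H^1(\Omega)$) so that the divergence theorem applies in its standard form, or invoke the density of smooth solenoidal vector fields with compact support in the space of $L^2$ divergence-free fields tangent to $\partial\Omega$. In the smooth setting these issues do not arise and the two-step argument above suffices.
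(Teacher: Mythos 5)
Your argument is correct and is essentially identical to the paper's proof: both rewrite $\mathbf{v}-\mathscr{P}(\mathbf{v})$ as $\nabla p$ via \eqref{eq:Leray-Helmholtz projection p}, integrate by parts on $p\mathbf{u}$, and kill the volume term with $\nabla\cdot\mathbf{u}=0$ and the boundary term with $\mathbf{u}\cdot\mathbf{n}=0$. Your added remark on the regularity needed for the divergence theorem is a reasonable refinement that the paper leaves implicit.
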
       
\begin{proof} 
In fact, we have 
  \begin{equation} 
\begin{aligned} 
|\Omega|\left\langle \mathbf{u},  \mathbf{v}-\mathscr{P} (\mathbf{v})  \right\rangle 
=&\int_{\Omega}\mathbf{u}\cdot\left(\mathbf{v}-\mathscr{P}(\mathbf{v})\right)\mathrm{d}\mathbf{x}=\int_{\Omega}\mathbf{u}\cdot\nabla p\mathrm{d}\mathbf{x}\\ 
=&\int_{\Omega}\nabla\cdot\left(\mathbf{u}p\right)-p\nabla\cdot\mathbf{u}\mathrm{d}\mathbf{x}\\ 
=&\int_{\partial\Omega}p\mathbf{u}\cdot\mathbf{n}\mathrm{d}\mathbf{x}=0, 
\end{aligned} 
\end{equation} 
where $p$ solves the equation \eqref{eq:Leray-Helmholtz projection p}.   The fourth equality applies the divergence theorem and the incompressibility of $\mathbf{u}$. The last equality follows from the no-flux boundary condition of $\mathbf{u}$. This completes the proof.	 
\end{proof} 
 
The following proposition will also be used in deriving the optimal stirring strategy.  
\begin{proposition}\label{pro:orthogonal divergence free 1} 
For all \(\mathbf{v}\in H^1(\Omega)\),  $\left\langle \mathbf{u},  \mathbf{v}- \Delta \mathscr{P} (\Delta^{-1}\mathbf{v})  \right\rangle=0$,  where $\mathbf{u}$ is a divergence-free vector field subject to no-flux boundary conditions. 
\end{proposition}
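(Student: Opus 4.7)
The plan is to reduce $\mathbf{v}-\Delta\mathscr{P}(\Delta^{-1}\mathbf{v})$ to a pure gradient, so that the integration-by-parts argument of Proposition \ref{pro:orthogonal divergence free} carries over directly. Let $\mathbf{w}:=\Delta^{-1}\mathbf{v}$ (inverse Laplacian applied componentwise, with a fixed choice of boundary condition so that $\Delta\mathbf{w}=\mathbf{v}$). By the definition \eqref{eq:Leray-Helmholtz projection}--\eqref{eq:Leray-Helmholtz projection p} of the Leray-Helmholtz projector applied to $\mathbf{w}$, we have $\mathscr{P}(\mathbf{w})=\mathbf{w}-\nabla q$, where $q$ solves $\Delta q=\nabla\cdot\mathbf{w}$ in $\Omega$ with $\mathbf{n}\cdot\nabla q=\mathbf{n}\cdot\mathbf{w}$ on $\partial\Omega$.

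Applying the Laplacian componentwise and using that $\Delta$ commutes with $\nabla$ on smooth scalar functions,
\[
\Delta\mathscr{P}(\mathbf{w}) \;=\; \Delta\mathbf{w}-\nabla(\Delta q) \;=\; \mathbf{v}-\nabla(\Delta q),
\]
so $\mathbf{v}-\Delta\mathscr{P}(\Delta^{-1}\mathbf{v})=\nabla(\Delta q)$, which is a pure gradient field. At this point I would mimic the calculation of Proposition \ref{pro:orthogonal divergence free}: split $\mathbf{u}\cdot\nabla(\Delta q)=\nabla\cdot(\mathbf{u}\,\Delta q)-(\Delta q)(\nabla\cdot\mathbf{u})$, use incompressibility of $\mathbf{u}$ to drop the second term, and apply the divergence theorem together with $\mathbf{u}\cdot\mathbf{n}|_{\partial\Omega}=0$ to eliminate the boundary integral. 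This yields $\langle\mathbf{u},\nabla(\Delta q)\rangle=0$, which is the claim.

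The main obstacle is regularity bookkeeping rather than any deep new idea. Since the hypothesis only gives $\mathbf{v}\in H^1(\Omega)$, one has to fix boundary conditions for $\Delta^{-1}$ so that $\mathbf{w}\in H^3(\Omega)$, verify that $\nabla\cdot\mathbf{w}$ and $\mathbf{n}\cdot\mathbf{w}$ form admissible data for the Neumann problem (including the usual compatibility condition between right-hand side and boundary datum), and ensure $q\in H^2(\Omega)$ so that the trace of $\Delta q$ on $\partial\Omega$ is well defined and the boundary integral makes sense. A clean way around these issues is to first prove the identity for smooth $\mathbf{v}$ and then extend to $H^1(\Omega)$ by density, relying on continuity of the linear operators $\Delta^{-1}$, $\mathscr{P}$, and the trace map.
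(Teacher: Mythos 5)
Your proposal is correct and follows essentially the same route as the paper's proof: unpack $\mathscr{P}(\Delta^{-1}\mathbf{v})=\Delta^{-1}\mathbf{v}-\nabla q$, commute $\Delta$ with $\nabla$ to reduce $\mathbf{v}-\Delta\mathscr{P}(\Delta^{-1}\mathbf{v})$ to the pure gradient $\nabla(\Delta q)$, then integrate by parts using incompressibility and the no-flux condition on $\mathbf{u}$. Your added remarks on regularity and the density argument go beyond what the paper records, but the core argument is the same.
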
       
\begin{proof} 
  We have 
\begin{equation} 
\begin{aligned} 
&|\Omega|\left\langle \mathbf{u},  \mathbf{v}- \Delta \mathscr{P} (\Delta^{-1}\mathbf{v})  \right\rangle=\int_{\Omega}\mathbf{u}\cdot\left(\mathbf{v}-\Delta\mathscr{P}(\Delta^{-1}\mathbf{v})\right)\mathrm{d}\mathbf{x}\\ 
=&\int_{\Omega}\mathbf{u}\cdot\left(\mathbf{v}-\Delta(\Delta^{-1}\mathbf{v}-\nabla \phi)\right)\mathrm{d}\mathbf{x}=\int_{\Omega}\mathbf{u}\cdot\nabla(\Delta\phi)\mathrm{d}\mathbf{x}\\ 
=&\int_{\partial\Omega}(\Delta\phi)\mathbf{u}\cdot\mathbf{n}\mathrm{d}\mathbf{x}=0,\\ 
\end{aligned} 
\end{equation} 
where $\phi$ solves the equation 
\begin{equation} 
\begin{aligned} 
\Delta\phi&=\nabla\cdot(\Delta^{-1}\mathbf{v})\quad \text{in}~~\Omega,\quad \mathbf{n}\cdot\nabla\phi &=\mathbf{n}\cdot(\Delta^{-1}\mathbf{v})\quad\text{on}~~\partial\Omega.  
\end{aligned}	 
\end{equation} 
 Applying the inverse Laplace operator on vector is defined as $\Delta^{-1} \mathbf{u}= (\Delta^{-1} u_{1},..., \Delta^{-1} u_{d})$. The third step exchanges the gradient and Laplacian operators. The fourth equation involves the divergence theorem and  the incompressibility of $\mathbf{u}$. The last step follows from the no-flux boundary conditions. 
\end{proof}

\section{Optimal stirring strategy}\label{sec:Optimal stirring strategy} 
In this section, we derive the lower bounds of mixing rates and present the explicit formula for the optimal stirring flow.  These analyses apply to general geometries with a no-flux boundary and thereby generalize previous results restricted to a periodic, rectangular domain.
 
\subsection{Lower bounds of the mix norm\label{sec:bounds}} 

We start by multiplying both sides of the equation \eqref{eq:advection equation} by the scalar $\varphi=\Delta^{-1}\theta$ that solves (\ref{eq:laplace no flux}) and integrate it over the domain $\Omega$, 
\begin{equation}\label{eq:advection phi} 
\begin{aligned} 
&\int_{\Omega}\varphi\partial_{t}\theta\mathrm{d} \mathbf{x}+	\int_{\Omega}\varphi \mathbf{u}\cdot\nabla\theta\mathrm{d}\mathbf{x} =0. 
\end{aligned} 
\end{equation} 
Applying the divergence theorem and the no-flux boundary condition reduce the first term on the right hand side to  
\begin{equation} 
\begin{aligned} 
\int_{\Omega}\varphi\partial_{t}\theta\mathrm{d} \mathbf{x}=&\int_{\Omega}\varphi\partial_{t}(\Delta\varphi)\mathrm{d}\mathbf{x} 
=\int_{\Omega}\left(\nabla\cdot(\varphi\partial_{t}\nabla\varphi)-\nabla\varphi\cdot\frac{\partial}{\partial t}(\Delta\varphi)\right)\mathrm{d}\mathbf{x}\\ 
=&\int_{\partial\Omega}\varphi\partial_{t}\nabla\varphi\cdot\mathbf{n}\mathrm{d}\mathbf{x}-\frac{1}{2}\int_{\Omega}\partial_{t}\vert\nabla\varphi\vert^{2}\mathrm{d}\mathbf{x}	=-\frac{1}{2}\frac{d}{dt}\Vert\nabla\varphi\Vert_{L^{2}}^{2}.\\ 
\end{aligned} 
\end{equation} 
The inner product of two vector functions is defined as $\langle \mathbf{u}, \mathbf{v}\rangle:=\frac{1}{|\Omega|}\int_{\Omega}\mathbf{u}(\mathbf{x})\cdot \mathbf{v}(\mathbf{x})\mathrm{d}\mathbf{x}$.  The $L^2$ norm of a vector function is $\lVert \mathbf{u}\rVert_{L^{2}}= \sqrt{\left\langle \mathbf{u}, \mathbf{u} \right\rangle}$.  Similarly, the second term in equation \eqref{eq:advection phi} can be rewritten as 
\begin{equation} 
\begin{aligned} 
\int_{\Omega}\varphi \mathbf{u}\cdot\nabla\theta\mathrm{d}\mathbf{x}&=-\int_{\Omega}\nabla(\varphi\mathbf{u})\cdot\theta\mathrm{d}\mathbf{x}+\int_{\partial\Omega}\theta\varphi \mathbf{u}\cdot\mathbf{n}\mathrm{d}\mathbf{x}\\ 
&=-\int_{\Omega}\theta\mathbf{u}\nabla\varphi\mathrm{d}\mathbf{x}-\int_{\Omega}\theta\varphi\nabla\cdot\mathbf{u}\mathrm{d}\mathbf{x}=-\int_{\Omega}\theta\mathbf{u}\nabla\varphi\mathrm{d}\mathbf{x}\\ 
\end{aligned} 
\end{equation} 
from flow incompressibility.
Combining above results yields 
\begin{equation}\label{eq:norm_dt} 
\frac{\mathrm{d}}{\mathrm{d}t}\|\theta\|_{m}^{2}=\frac{\mathrm{d}}{\mathrm{d}t}\Vert\nabla\varphi\Vert_{L^{2}}^{2}=-2\int_{\Omega}\theta \mathbf{u}\cdot\nabla\varphi\mathrm{d}\mathbf{x},
\end{equation} 
which equates the decay rate of the scalar mix norm (\ref{norm}) and a linear functional of the control stirring $\mathbf{u}$.
 
We first consider the case with the fixed energy constraint. Utilizing the H\"older and Cauchy-Schwarz's inequalities,  we derive the following inequality  
\begin{equation}\label{eq:inequality 1} 
\begin{aligned} 
\frac{\mathrm{d}}{\mathrm{d}t}\|\theta\|_{m}^{2}&=-2\int_{\Omega}\theta \mathbf{u}\cdot\nabla\varphi\mathrm{d}\mathbf{x}\geq-2\Vert\theta\mathbf{u}\Vert_{L^{2}}\Vert\nabla\varphi\Vert_{L^{2}}\\ 
&\geq-2\Vert\theta\Vert_{L^{\infty}}\Vert\mathbf{u}\Vert_{L^{2}}\Vert\nabla\varphi\Vert_{L^{2}}=-2U\vert\Omega\vert^{1/2}\Vert\theta\Vert_{L^{\infty}}\Vert\theta\Vert_{m}, 
\end{aligned} 
\end{equation} 
where the last step follows the fixed energy constraint \eqref{eq:fixed energy}. 
 
Notice that the incompressible advection conserves not only the variance of $\theta$ in time but also the $L^{\infty}$ norm (the supremum of $|\theta|$ over the spatial domain).  Namely, $\lVert \theta (x,t)\rVert_{\infty}= \lVert \theta_{0}\rVert_{\infty}$ at every time $t>0$.  Dividing the inequality \eqref{eq:inequality 1} by $2\Vert\theta\Vert_{m}$ and integrating over time imply: 
\begin{equation}\label{eq:lower bound energy} 
	\Vert\theta(\cdot, t)\Vert_{m}\geq\Vert\theta_{0}\Vert_{m}-U\vert\Omega\vert^{1/2}\Vert\theta_{0}\Vert_{L^{\infty}}t. 
\end{equation} 
This establishes a lower bound for the time at which well-mixedness is achieved: 
\begin{equation} 
t\geq\frac{1}{U\vert\Omega\vert^{1/2}}\frac{\Vert \theta_{0}\Vert_{m}}{\Vert\theta_{0}\Vert_{L^{\infty}}}. 
\end{equation} 
 
Next we consider the flow under fixed enstrophy constraint.  With the assumption $\Vert\nabla\mathbf{u}\Vert_{L^{\infty}}\leq \gamma(t)<\infty$, from the equation \eqref{eq:norm_dt}, we have 
\begin{equation} 
	\frac{\mathrm{d}}{\mathrm{d}t}\|\theta\|_{m}^{2}=2\int_{\Omega}\sum_{i, j=1}^{d} \partial_{x_{j}}\varphi \partial_{x_{i}}\varphi \partial_{x_{j}} u_{i} \mathrm{d}\mathbf{x} 
	\geq-2\gamma(t)\Vert\theta\Vert_{m}^{2}.  
\end{equation} 
An exponential lower bound of the mix norm is obtained by using the Gr\"onwall's  inequality: 
\begin{equation}\label{eq:lower bound enstrophy} 
\Vert\theta(\cdot, t)\Vert_{m}\geq \Vert\theta_{0}\Vert_{m}e^{-\int_{0}^{t}\gamma(s)\mathrm{d}s}. 
 \end{equation} 
 
It should be noted that the lower bounds \eqref{eq:lower bound energy} and \eqref{eq:lower bound enstrophy} are applicable to any incompressible velocity field. Consequently, there may be a gap between these formulas and the mix norm of the scalar field advected by the local-in-time optimal flow. 
       
\subsection{Optimal stirring strategy with fixed energy constraint}
We now investigate the optimal mixing flow field under the constraint of fixed energy.   The decay rate \eqref{eq:norm_dt} to be optimized can be rewritten using the projection operator as: 
\begin{equation}\label{eq:norm dt fixed energy} 
\begin{aligned} 
&\frac{\mathrm{d}}{\mathrm{d}t}\|\theta\|_{m}^{2}=-2\int_{\Omega}\mathbf{u}\cdot\left(\mathscr{P}(\theta\nabla\varphi)+\theta\nabla\varphi-\mathscr{P}(\theta\nabla\varphi)\right)\mathrm{d}\mathbf{x}=-2\int_{\Omega}\mathbf{u}\cdot\mathscr{P}(\theta\nabla\varphi)\mathrm{d}\mathbf{x}, 
\end{aligned} 
\end{equation} 
where the second step follows from the proposition \ref{pro:orthogonal divergence free}, $\left\langle \mathbf{u}, \theta\nabla\varphi-\mathscr{P}(\theta\nabla\varphi) \right\rangle=0$.

To minimize the decay rate functional \eqref{eq:norm dt fixed energy} we introduce a Lagrange multiplier to impose the fixed energy constraint to the flow and the cost functional is defined as:
\begin{equation}\label{eq:Lagrange multiplier formalism} 
J_{e}(\mathbf{u})=-2\int_{\Omega}\mathbf{u}\cdot \mathscr{P}(\theta\nabla\varphi)\mathrm{d}\mathbf{x}+\lambda\left(\int_{\Omega}|\mathbf{u}|^{2}\mathrm{d}\mathbf{x}-U^{2}|\Omega|\right). 
\end{equation} 
and its the G\^ateaux derivative is
\begin{equation} 
dJ_{e} (\mathbf{u};\mathbf{v}):=\lim\limits_{\epsilon\to 0}\frac{J_{e}(\mathbf{u}+\epsilon\mathbf{v})-J_{e}(\mathbf{u})}{\epsilon}=2\int_{\Omega}\mathbf{v}  \cdot\left(-\mathscr{P}(\theta\nabla\varphi)+\lambda \mathbf{u} \right)\mathrm{d}\mathbf{x}, 
\end{equation} 
where both $\mathbf{v}$ and $\mathbf{u}$ satisfy the no-flux boundary condition.  Since the optimal control $\mathbf{u}_e$ solves $dJ_{e} (\mathbf{u}_e;\mathbf{v})=0$ for any $\mathbf{v}$, we have $\lambda \mathbf{u}_e=\mathscr{P}(\theta\nabla\varphi)$.  The energy constraint further specifies the value of $\lambda$ and the optimal stirring is therefore 
\begin{equation}\label{eq:optimal energy} 
\mathbf{u}_e=U\frac{\mathscr{P}(\theta\nabla\varphi)}{\langle|\mathscr{P}(\theta\nabla\varphi)|^{2}\rangle^{1/2}}. 
\end{equation} 
The strategy \eqref{eq:optimal energy} resembles the optimal stirring derived in \cite{lunasin2012optimal} and \cite{lin2011optimal}. However, it is a significant extension for its broader applicability for domains with more general shape and no-flux boundary conditions. 
 
\subsection{Optimal stirring strategy with fixed enstrophy constraint\label{sec:optmix_fixed_ens}} 
This subsection is dedicated to the optimal mixing under the fixed enstrophy constraint and we will see how non-periodic boundaries poses a new challenge.  

Similar to the fixed energy case, we first rewrite mix norm decay rate  \eqref{eq:norm_dt} as follows: \begin{equation} 
\begin{aligned} 
&\frac{\mathrm{d}}{\mathrm{d}t}\|\theta\|_{m}^{2}=-2\int_{\Omega}\mathbf{u}\cdot\left(\Delta\mathscr{P}(\Delta^{-1}(\theta\nabla\phi))+\theta\nabla\phi-\Delta\mathscr{P}(\Delta^{-1}(\theta\nabla\phi))\right)\mathrm{d}\mathbf{x}\\ 
&=-2\int_{\Omega}\mathbf{u}\cdot\Delta\mathscr{P}(\Delta^{-1}(\theta\nabla\phi))\mathrm{d}\mathbf{x}.
\end{aligned} 
\end{equation} 
 The second step follows from proposition \ref{pro:orthogonal divergence free 1}.  
 
To incorporate the fixed enstrophy constraint \eqref{eq:fixed enstrophy}, we defined the cost functional as: 
\begin{equation} 
J_{en}(\mathbf{u})=-2\int_{\Omega}\mathbf{u}\cdot\Delta\mathscr{P}\Delta^{-1}(\theta\nabla\varphi)\mathrm{d}\mathbf{x}+\lambda\left(\int_{\Omega}\sum_{i, j=1}^{d}\left(\partial_{x_{j}} u_{i}\right)^{2}\mathrm{d}\mathbf{x}-\frac{|\Omega|}{\tau^{2}}\right). 
\end{equation} 
Computing the functional derivative yields: 
\begin{equation}\label{eq:dJpuv} 
\begin{aligned} 
dJ_{en} (\mathbf{u};\mathbf{v})&:=  \lim\limits_{\epsilon\to 0}\frac{J_{p}(\mathbf{u}+\epsilon\mathbf{v})-J_{p}(\mathbf{u})}{\epsilon}=-2\int_{\Omega}\mathbf{v}\cdot\Delta\mathscr{P}\Delta^{-1}(\theta\nabla\varphi)\mathrm{d}\mathbf{x}+2\lambda\int_{\Omega}\sum_{i, j=1}^{d}\partial_{x_{j}} u_{i} \partial_{x_{j}} v_{i} \mathrm{d}\mathbf{x}\\ 
  &=-2\int_{\Omega}\mathbf{v}\cdot \left( \Delta\mathscr{P}(\Delta^{-1}(\theta\nabla\varphi))+\lambda \Delta \mathbf{u}  \right)\mathrm{d}\mathbf{x}+2\lambda \int_{\partial\Omega} \sum\limits_{i,j=1}^{d}v_{i}n_{j}\partial_{x_{j}}u_{i}\mathrm{d}\mathbf{x}, 
\end{aligned} 
\end{equation} 
where both $\mathbf{v}$ and $\mathbf{u}$ satisfy the no-flux boundary condition, $\mathbf{n}= (n_{1},n_{2},...,n_{d})$. 
 
If we further assume that the boundary integral, $\int_{\partial\Omega} \sum\limits_{i,j=1}^{d}v_{i}n_{j}\partial_{x_{j}}u_{i}\mathrm{d}\mathbf{x}$, in the above computation vanishes, which is guaranteed in the periodic case,  the requirement $dJ_{en} (\mathbf{u};\mathbf{v})=0$ for all $\mathbf{v}$ implies 
\begin{equation}
	\Delta\mathscr{P}(\Delta^{-1}(\theta\nabla\varphi))+\lambda \Delta \mathbf{u}=0.
\end{equation}
  Then we retrieve the optimal stirring field with given enstrophy as  
\begin{equation}\label{eq:optimal enstrophy} 
\mathbf{u}_{en}=\frac{1}{\tau}\frac{-\mathscr{P}(\Delta^{-1}(\theta\nabla\varphi))}{\langle|\nabla\mathscr{P}(\Delta^{-1}(\theta\nabla\varphi))|^2\rangle^{1/2}}. 
\end{equation} 

In general, the boundary integral in (\ref{eq:dJpuv}) is non-zero for non-periodic boundaries. However, we consider the expression (\ref{eq:optimal enstrophy}) valuable for three reasons. First, in a rectangular domain the boundary integral is indeed zero which can be shown by making an even extension for the quantities in the original rectangle with a no-flux boundary, to a larger rectangle with a periodic boundary. We require the evenness of the extension in order to maintain the no-flux condition at all times. Then the symmetry ensures that the integral along opposite edges cancels out resulting an overall vanishing contribution. Consequently the flow \eqref{eq:optimal enstrophy} is the optimal strategy in this context and further details are provided in Appendix \ref{sec:Even extension for the square domain}.  Second, numerical simulations in the subsequent section demonstrate that the  strategy \eqref{eq:optimal enstrophy} is still a very efficient mixer in the sense that it also  produces an exponential decay in the mix norm even when the domain is not rectangular.  Last but not the least, the relatively simple expression serves as a convenient subject for further analysis and computation in scalar mixing problems.  For these considerations, we refer to the flow \eqref{eq:optimal enstrophy} as the ``quasi-optimal'' stirring.

\section{Numerical simulation}\label{sec:Numerical simulation} 
In this section we simulate the evolution of the passive scalar field advected by the optimal strategies \eqref{eq:optimal energy} and \eqref{eq:optimal enstrophy} respectively.  While the results apply to domains in any finite-dimensional space $\mathbb{R}^{d}$, we focus on the two-dimensional cases where $d=2$ hereafter for the sake of clarity.  The scalar governed by \eqref{eq:advection equation} and the associated optimal flows are computed via the spectral method implemented with Matlab in 2D rectangles. For general domains such as circles, annuli and etc., we employ the finite element algorithm, which is implemented via  Freefem++, a versatile finite element method (FEM) software package \cite{hecht2012new}. The details of these algorithms can be found in Appendix \ref{sec:numerical_methods}. 
 
\subsection{Energy constrained simulations with no-flux boundary conditions} 
In the following simulations, we set the initial condition to be a superposition of two one-dimensional sinusoidal waves as
\begin{equation}\label{eq:energy_theta} 
\theta_{0}(x,y)=0.5\sin(\pi x)+0.25\sin(2\pi y), 
\end{equation} 
and the parameter in the fixed energy constraint \eqref{eq:fixed energy} to $U=1$.
 
\subsubsection{Optimal stirring in a square under no-flux boundary conditions} 
 
\begin{figure}[ht] 
	\centering 
	\includegraphics[width=1\linewidth]{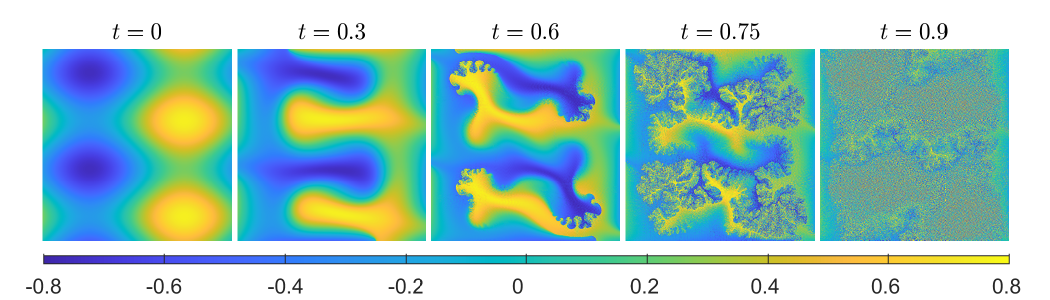} 
	\caption{Evolution of the scalar field in $[-1, 1]^2$ with fixed energy constraint \eqref{eq:fixed energy} and $U=1$ with initial condition \eqref{eq:energy_theta} that has higher scalar concentration on the right half of the square.} 
\label{fig:energy_eg_square}	 
\end{figure} 
\begin{figure}[ht] 
  \centering 
  \subfigure[]{ 
\includegraphics[width=0.46\linewidth]{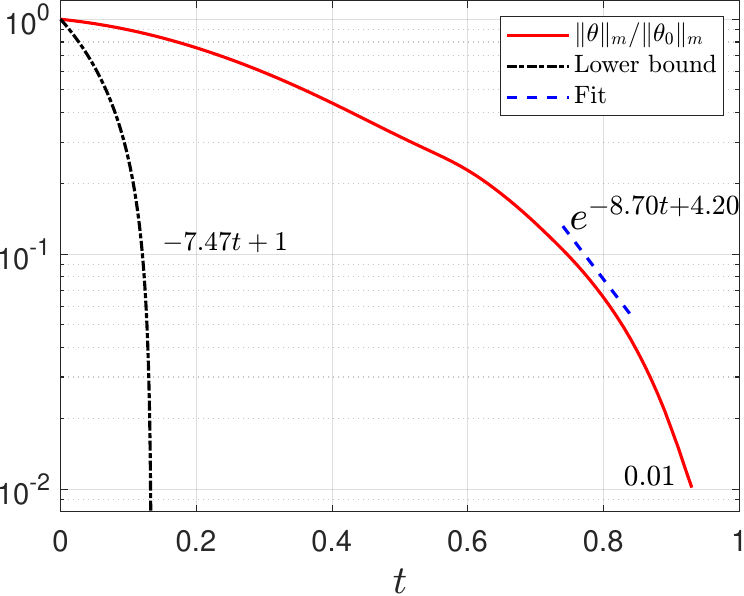} 
} 
\subfigure[]{ 
  \includegraphics[width=0.46\linewidth]{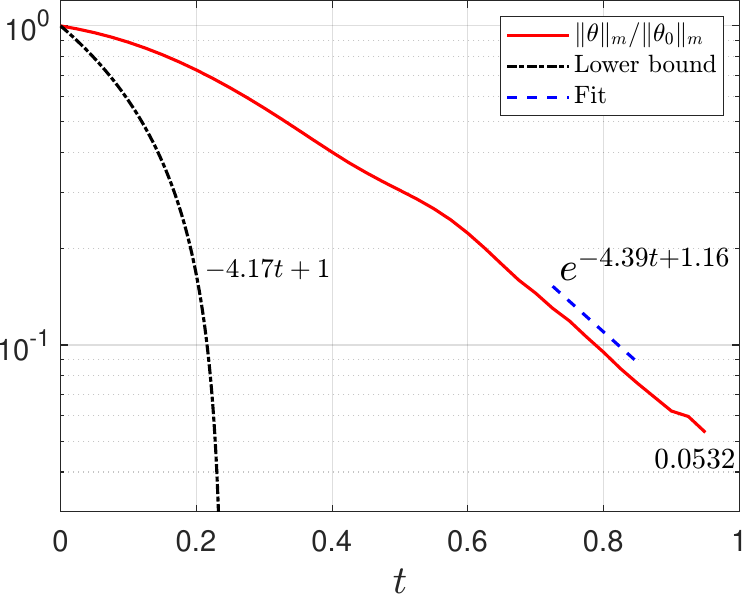} 
  } 
  \hfill 
  \caption[] { Semi-logarithmic plot of mix norms under fixed energy constraint for (a). the square $[-1,1]^2$ and for (b). the circular region (\ref{eq:circular domain}) with the same initial condition \eqref{eq:energy_theta}. The red, solid curves represent the renormalized mix norm. The blue, dashed lines represent the fitted exponential function of the mix norm. The black, dot-dashed curves represent the theoretical lower bound (\ref{eq:lower bound energy}).} 
  \label{fig:energy mix norm} 
\end{figure} 

For the simple square:
\begin{equation}\label{eq:square domain} 
\begin{aligned} 
\left\{ (x,y) | x,y\in [-1, 1] \right\}, 
\end{aligned} 
\end{equation} 
we use a spatial discretization of $1025^2\approx 10^6$ points and a time step size of $\Delta t=0.01$.  Figure \ref{fig:energy_eg_square} illustrates the evolution of the scalar field with the initial condition \eqref{eq:energy_theta}. Initially, advection disrupts the large-scale features of the scalar field and transports the scalar from right to the left. Meanwhile the no-flux condition prohibits scalar passage through the boundary and tangentially-stretched, less-mixed layers are formed most notably at left and right edges.  Eventually stirring demolishes large-scale variances and cascades to smaller and smaller scales due to the interplay between the scalar and the optimal strategy established in \eqref{eq:fixed energy}.  As a result, more intricate fractal-like structures develop as time progresses. 

There is an additional feature for the optimal stirring under the fixed energy constraint when the flow is unlimited to re-distribute towards stronger velocity gradients associated with efficient small-scale mixing.  Correspondingly, the bulk, large-scale transport is quickly suppressed and is negligible after $t=0.6$.  This also suggests that this constraint might lead to an optimal stirrer producing an even faster mix norm decay compared to fixed enstrophy.  This has also been hinted in Section \ref{sec:bounds} which suggested a possible well-mixedness at a finite time for fixed energy, whereas the mix norm decays at most exponentially and vanishes only in the asymptotic limit for fixed enstrophy.  

Panel (a) in Figure \ref{fig:energy mix norm} supports this conjecture in which the normalized mix norm is plotted in time with the red, solid curve along with the bound (\ref{eq:lower bound energy}) represented by the black, dot-dashed curve.   As time advances, the norm decreases super-exponentially and reaches approximately 1\% of its initial value at $t=0.9$.   An exponential decay rate is fitted to be $8.70$ for $t\in[0.75, 0.85]$.  In this setting, the bound \eqref{eq:lower bound energy} reduces to $(1-7.47t)\Vert\theta_{0}\Vert_{m}$ which for all time remains below the evolving mix norm as predicted. However, the substantial gap between the lower bound and the mix norm optimized locally in time suggests that there is room for improving the lower bound or further enhancing the overall decay with a global-in-time control.

\subsubsection{Optimal stirring in a circle under no-flux boudary conditions} 
The domain considered here is a circle of radius $R=2/\sqrt{\pi}$ centered at the origin: 
\begin{equation}\label{eq:circular domain} 
\left\{ (x,y)| x^{2}+y^{2}\leq \frac{4}{\pi} \right\},
\end{equation} 
whose area is the same as the square $[-1,1]^2$ we considered above. In our simulations, the finite-element mesh consists of 196675 vertices and 391848 triangles, the minimal edge size of the triangulation is $0.00334173$, the maximal is $0.00948779$. The time step size is $\Delta t=0.025$. 
 
\begin{figure}[ht] 
\centering 
\includegraphics[width=0.7\textheight]{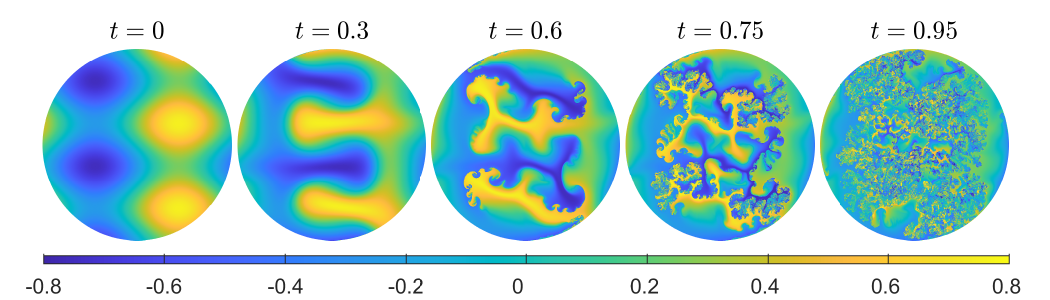} 
   \caption{Evolution of the scalar field with fixed energy constraint  \eqref{eq:fixed energy} and $U=1$ in a circle of radius $2/\sqrt{\pi}$.  The initial condition is provided in equation  \eqref{eq:energy_theta}.} 
\label{fig:energy_eg_circle}	 
    \end{figure} 
     The scalar plots in Figures \ref{fig:energy_eg_square} and \ref{fig:energy_eg_circle} reveal similar mixing patterns.  Panel (b) of Figure \ref{fig:energy mix norm} illustrates the evolution of the normalized mix norm. In the circular case, the mix norm exhibits a stable,  exponential decay with a fitted rate of $4.39$.  It should be noted that this rate deviates significantly from the square case suggesting a strong correlation between the decay rate and the specific geometry.  In this setting, it can be computed that  $\Vert\theta_{0}\Vert_{m}=0.3596, \Vert\theta_{0}\Vert_{L^{\infty}}=0.75$ and the lower bound \eqref{eq:lower bound energy} reduces to $(1-4.17t)\Vert\theta_{0}\Vert_{m}$.

\subsection{Enstrophy constrained simulations with no-flux boundary conditions} 
We now turn to optimal stirring with a fixed enstrophy constraint and will demonstrate the general applicability of our results in various geometries.

\subsubsection{Optimal stirring in a square under no-flux boundary conditions}  
 
\begin{figure} 
	\centering 
	\includegraphics[width=1\linewidth]{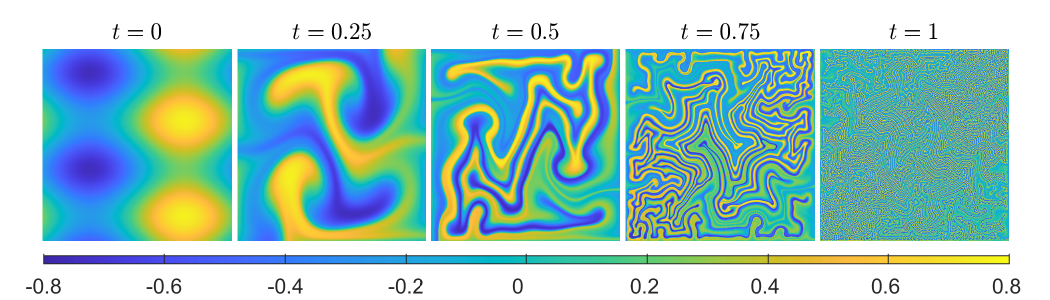} 
	\caption{Evolution of the scalar field in $[-1, 1]^2$ with fixed enstrophy constraint \eqref{eq:fixed enstrophy} and $\tau^{-1}=15$, the initial condition is \eqref{eq:energy_theta}.} 
	\label{fig:enstrophy square}	 
\end{figure} 
Figure \ref{fig:enstrophy square} illustrates the evolution of the scalar field in $[-1, 1]^2$ uniformly discretized with $513^2$ points with the initial condition \eqref{eq:energy_theta}.  Here the parameter in the fixed enstrophy constraint \eqref{eq:fixed enstrophy} is set as $\tau^{-1}=15$ and the time step size is  $\Delta t=0.01$. The mixing pattern is visibly different from those shown in Figures \ref{fig:energy_eg_square} and \ref{fig:energy_eg_circle}.  But the general cascading towards smaller scales in both the velocity and the scalar fields are maintained, as well as the overall exponential decay rate.
   
In contrast to the optimal stirrer with fixed energy, the fixed enstrophy puts a limits onthe velocity gradient.  Therefore a large-scale flow persists and can be easily observed accompanying the emergence and growth of finer structures. This mechanism  eventually leads to a more uniform scalar field in the large-scale sense without apparent boundary layers as seen in the fixed energy case.

\begin{figure} 
	\centering 
	\includegraphics[width=0.45\linewidth]{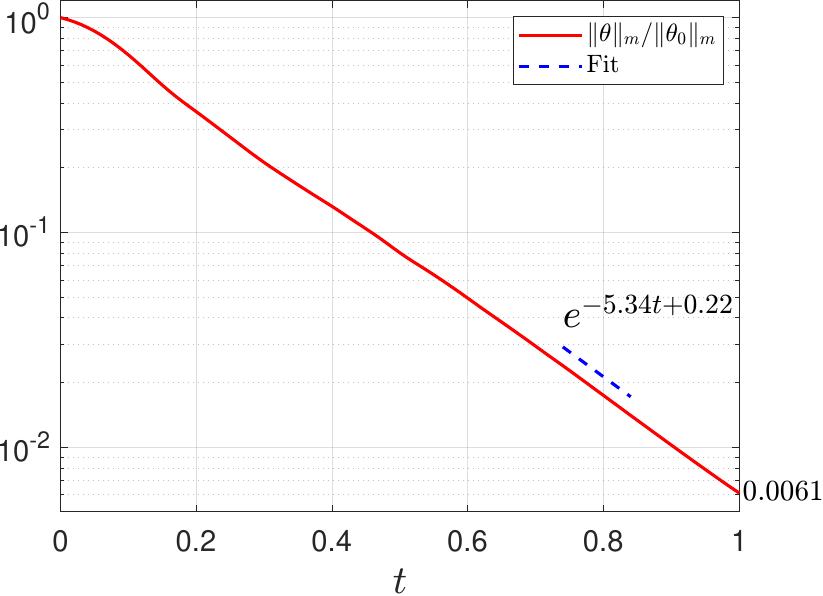}	 
	\caption{Semi-logarithmic plot of mix norm in the square with the initial scalar \eqref{eq:energy_theta} under fixed enstrophy constraint. The red solid line represents the mix norm. The dashed blue line represents the fitted exponential function of the mix norm.} 
	\label{fig:enstrophy square norm} 
\end{figure} 
 
\subsubsection{Quasi-optimal stirring in general domains under no-flux boundary conditions} 
In this section, we will extend the simulation to domains of general, non-rectangular shapes for which we have established in Section \ref{sec:optmix_fixed_ens} that the stirring strategy (\ref{eq:optimal enstrophy}) is only quasi-optimal.  

Here we consider three different geometries: 1. The circle \eqref{eq:circular domain}; 2. The non-convex, L-shaped region 
\begin{equation}\label{eq:L domain} 
\left\{ (x,y)| x\in [0,\sqrt{5}], y\in[0,\sqrt{5}-1] \right\}\cup\left\{ (x,y)| x\in [0,\sqrt{5}-1], y\in[\sqrt{5}-1,\sqrt{5}] \right\}; 
\end{equation} 
And 3. The annulus 
\begin{equation}\label{eq:donut domain} 
\left\{ (x,y)| \frac{1}{\pi}\leq x^{2}+y^{2}\leq \frac{5}{\pi}\right\},
\end{equation} 
which is not simply-connected.

 The area of all three domains is $4$. The parameter in the fixed enstrophy constraint \eqref{eq:fixed enstrophy} is set universally as $\tau^{-1}=15$ and the numerical time step size is $\Delta t=0.01$.

\begin{figure} 
\centering 
\subfigure[]{ 
    \includegraphics[width=1\linewidth]{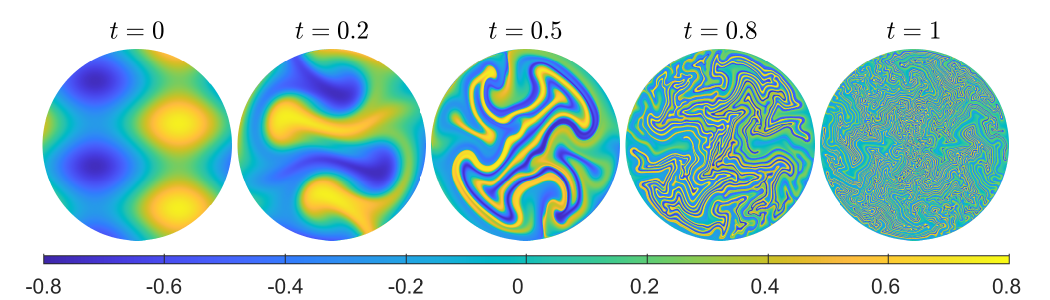} 
  } 
\subfigure[]{ 
	\includegraphics[width=1\linewidth]{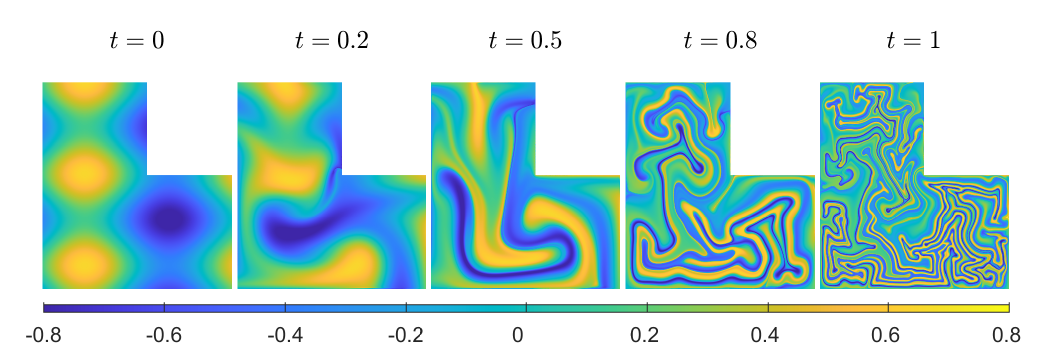} 
     } 
\subfigure[]{ 
      \includegraphics[width=1\linewidth]{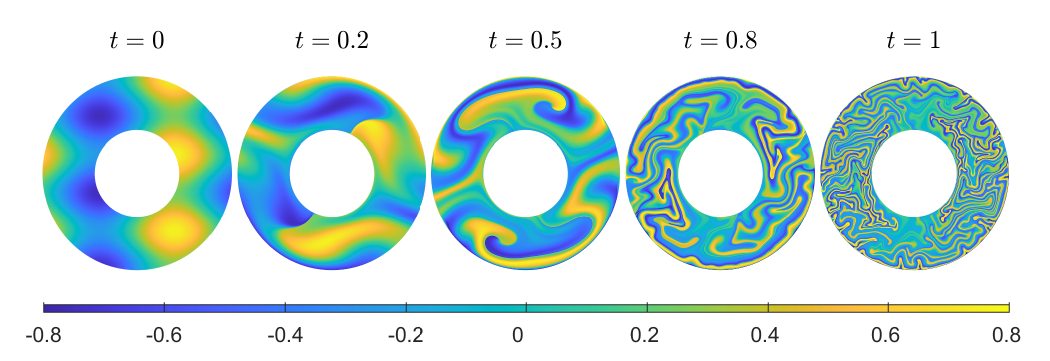} 
      } 
\caption{Evolution of the scalar field with the initial condition \eqref{eq:energy_theta}  under fixed enstrophy constraint in  (a). a circular region, in (b). a L-shaped region and in (c). an annulus. The constraint parameter is $\tau^{-1}=15$.} 
	\label{fig:enstrophy circle}	 
\end{figure} 
 
Figure \ref{fig:enstrophy circle} showcases the evolution of the scalar field mixed by the proposed quasi-optimal stirring strategy with the initial condition given by \eqref{eq:energy_theta} across these three domains.  Although qualitative differences are obvious, a reasonably degree of well-mixedness is reached in every geometry with the most homogenized scalar being found in the regular circle.  In the L-shape domain and annulus, the mixing degree is comparable but less homogenized compared to the circular domain. Further justification of the quasi-optimal strategy is provided in Figure \ref{fig:enstrophy norms} where all mix norms exhibit exponential decay.  Again, the differences in specific rates point to the impact of detailed geometry on the mixing process.   

This motivates our study of the eigenvalues among all descriptors for  the Laplace operator \cite{kuttler1984eigenvalues}. Since the mix norm is defined with the inverse of the Laplace operator, it is only reasonable to hypothesize that the eigenvalue of the Laplace operator could play a big role in determining the decay rate of the mix norm.  The eigenfunctions $\phi_{k}$ and their corresponding eigenvalues $\lambda_{k}$ for the Laplace operator satisfy the following equation: 
 
\begin{equation} 
\begin{aligned} 
\Delta\phi_{k}&=-\lambda_{k} \phi_{k} \quad \text{in}~~\Omega, \quad \left. \nabla\phi_k\cdot\mathbf{n} \right|_{\mathbf{x} \in \partial \Omega }=0, 
\end{aligned} 
\end{equation} 
 
In the case of a bounded domain, these eigenvalues form an increasing sequence: $0=\lambda_{0}<\lambda_{1}\leq \lambda_{2}\leq...\leq \lambda_{k}\leq...$. The smallest non-zero eigenvalue, denoted as $\lambda_{1}$, for different domains is presented in Table \ref{tab:eigenvalue}. A noteworthy observation is that the eigenvalue of the Laplace operator for the square $[-1,-1]^2$ is similar to that for the circular (\ref{eq:circular domain}), and correspondingly the normalized mix norms at the end of of the simulations when $t=1$ are approximately the same for these two geometries. On the other hand, $\lambda_{1}'s$ for the L-shaped region and for the annulus are both nearly half of those for the square and circle.  As a result, the normalized mix norm at $t=1$ is 2 to 3 times larger in these irregular domains than in the square or circle which is consistent with Figure \ref{fig:enstrophy norms}.  In summary, we come to an empirical conclusion: a smaller $\lambda_{1}$ implies a slower decay rate of the mix norm. 
 
\begin{figure} 
	\centering 
	\includegraphics[width=0.45\linewidth]{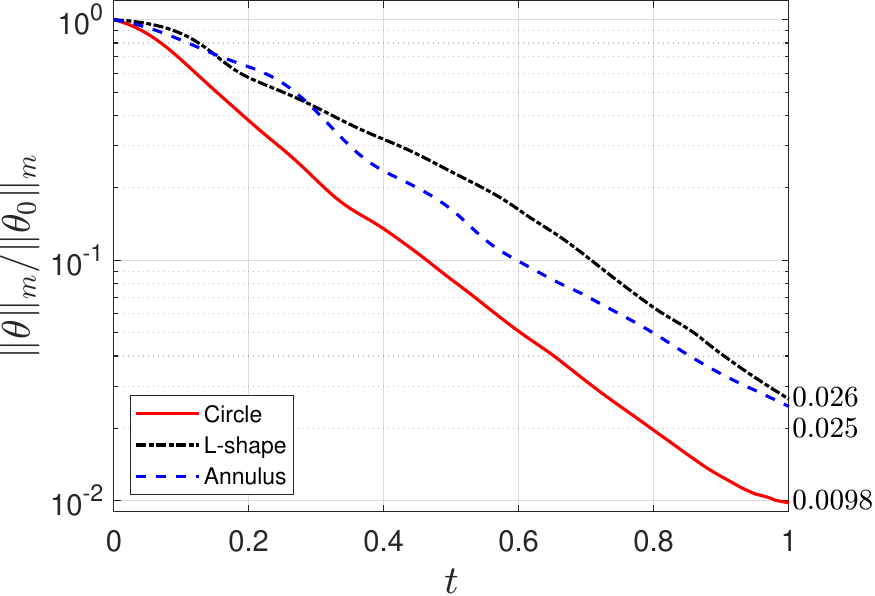}	 
	\caption{Semi-logarithmic plot of mix norm in regions with different shapes under fixed enstrophy constraint. The red solid  curve, the black dot-dashed curve and blue dashed curve represent the mix norm in the circular region, ``L'' shape region and annular region, respectively. The approximations of these curves at the final stage of the simulation are $\exp(-4.88t-0.003)$, $\exp(-3.71t+0.26)$ and $\exp(-4.02t+0.2111)$, respectively. 
        } 
	\label{fig:enstrophy norms} 
\end{figure}

\begin{table}[ht] 
\centering 
\begin{tabular}{|l|c|c|} 
\hline 
& First nonzero eigenvalue $\lambda_{1}$ &  $\frac{\Vert\theta (\mathbf{x},1 )\Vert_{m}}{\Vert\theta (\mathbf{x}, 0)\Vert_{m}}$    \\ \hline 
Square \eqref{eq:square domain}& $\pi^{2}/4\approx$2.4674  & 0.006115 \\ \hline 
Circle  \eqref{eq:circular domain}& 2.66422 & 0.009823\\ \hline 
``L'' -shape    \eqref{eq:L domain}& 1.31596 & 0.02646  \\ \hline 
Annulus \eqref{eq:donut domain}& 1.24891 & 0.02469\\ \hline 
\end{tabular} 
\caption{ The smallest eigenvalue  $\lambda_{1}$ of the Laplace equation with Neumann boundary condition on different domain. The third column lists the normalized mix norm  $\frac{\Vert\theta (\mathbf{x},1 )\Vert_{m}}{\Vert\theta (\mathbf{x}, 0)\Vert_{m}}$ at $t=1$ for the simulations presented in Figure \ref{fig:enstrophy square} and \ref{fig:enstrophy circle}.   } 
\label{tab:eigenvalue} 
\end{table}

\subsection{Dependence of the mix norm decay rate on the initial condition}

We also investigate the impact of the initial condition on the decay rate of the mix norm. Lin et al. \cite{lin2011optimal} defined a characteristic length scale for the initial distribution as
\begin{equation} 
l_{0}=\frac{\Vert\theta_{0}\Vert_{m}}{\Vert\theta_{0}\Vert_{L^{2}}}, 
\end{equation} 
and it was reported that the exponential decay rate is insensitive to the change in this parameter.  We also conduct several simulations under a fixed enstrophy constraint with initial distributions of variable length scales computed as above.  The results are summarized in Table \ref{table:l0} and similar to the  periodic case, a significant difference in the characteristic length scale of the scalar field does not entail a noticeable change in the exponential decay rate of the mix norm.
 
\begin{table}[] 
	\begin{tabular}{|c|c|c|c|c|} 
		\hline 
No.	&Initial function $\theta_{0}$& $\frac{\Vert\theta (\mathbf{x},1 )\Vert_{m}}{\Vert\theta (\mathbf{x}, 0)\Vert_{m}}$&Fitted decay rate\\ \hline 
		$1$&\multicolumn{1}{l|}{$\cos(4\pi x)+1.357\cos(6\pi y)$}&  0.0165  & -4.2017 \\ \hline 
		$2$&\multicolumn{1}{l|}{$0.566\cos(4\pi x)+\cos(6\pi y)+\cos(2\pi x)\cos(4\pi y)$} & 0.0143   &-4.4952  \\ \hline 
		\multirow{2}{*}{$3$}&\multicolumn{1}{l|}{$0.3\cos(4\pi x)+\cos(6\pi x)+0.896\cos(6\pi y)$} &\multirow{2}{*}{0.0142}&\multirow{2}{*}{-4.3123}\\  
		&\multicolumn{1}{l|}{$+2\cos(2\pi x)\cos(4\pi y)$} &&\\ \hline 
		$4$&\multicolumn{1}{l|}{$\sin(10\pi y)+0.2496(x-0.5)(x+0.5)$}&0.0167&-4.3557  \\ \hline 
		$5$&\multicolumn{1}{l|}{$\cos(\pi x)+0.3\cos(2\pi y)$} &  0.0133  & -4.2852 \\\hline 
		$6$&\multicolumn{1}{l|}{$2\cos(\pi x)+0.3\cos(2\pi y)+0.92\cos(\pi x)\cos(\pi y)$} & 0.0126   &-4.4076  \\\hline 
		$7$&\multicolumn{1}{l|}{$3\cos(\pi x)+0.7\cos(2\pi y)+\cos(\pi x)\cos(\pi y)$}&  0.0128  &-4.3620  \\\hline 
		$8$&\multicolumn{1}{l|}{$x^4+0.273y(y-0.4)$} &   0.0234 & -3.9690 \\ \hline 
	\end{tabular} 
	\caption{The functions labeled as No. 1-4  satisfy $l_{0}=0.06$. The functions labeled as No. 5-8 satisfy $l_{0}=0.3$.  Their normalized mix norms  at the end time $t=1$ are displayed in the third column. We fit the mix norm with $exp(at+b)$, where the value $a$ is displayed in the fourth column.  } 
	\label{table:l0} 
\end{table}

\subsection{Effect of boundary conditions: periodic versus no-flux}\label{Compare} 
In previous sections, we have conducted a comprehensive study for the optimal strategies \eqref{eq:optimal energy} and \eqref{eq:optimal enstrophy} in various settings from both theoretical and numerical prospectives. A pertinent question emerges from their striking resemblance to established results for a periodic boundary \cite{lunasin2012optimal,lin2011optimal}: Exactly how do different types of boundary conditions impact the optimal stirring strategy? We will see in the following that the answer depends on whether there exists a symmetry in the initial field. Without loss of generality, we focus on the two dimensional square $[-1,1]^2$.
 
\subsubsection{Even initial conditions} 

For initial scalar distribution that is even in both directions, namely,
\begin{equation}
	\theta_0(x,y)=\theta_0(-x,y), \quad \theta_0(x,y)=\theta_0(x,-y),
	\label{eq:even_init}
\end{equation}
the optimal stirring with a periodic boundary maintains the symmetry and there is no scalar flux across the edges for all times. Therefore it will be identical to the strategy \eqref{eq:optimal energy} or \eqref{eq:optimal enstrophy} depending on different velocity constraints.

Simulations for a series of initial scalar distributions satisfying (\ref{eq:even_init}) have been performed to verify this equivalence. For each $\theta_0$, the scalar field is advected by either the optimal stirring with a periodic boundary or with a no-flux one with the same set of numerical and constraint parameters. As expected, two sets of results are effectively identical without exception. For example, for the initial field  
\begin{equation}\label{eq:even initial} 
\theta_{0}=\cos(2\pi x)\cos(\pi y)+0.5\cos(2\pi y), 
\end{equation}  
 scalar evolutions are visibly identical and the mix norms produced by numerics for two types of boundary conditions only differ by a negligible relative error of $0.0012$ throughout the entire simulations conducted on a $512^2$ grid. This consistency holds for both fixed energy and fixed enstrophy constraints.

\subsubsection{  Non-even initial conditions} 
For general initial conditions without the symmetry structure (\ref{eq:even_init}), the optimal stirring with a periodic boundary does not support impermeable boundaries and would inevitably deviate from the no-flux cases. As an illustrative example, we start by simulating the distribution (\ref{eq:energy_theta}). Here the parameter for the fixed energy constraint in equation (\ref{eq:fixed energy}) is again $U=1$. Figure \ref{fig:energy noeven comparison} depicts the evolution of the scalar field advected by the optimal flow subject to the fixed energy constraint and a periodic boundary as a comparison to the no-flux results presented in Figure \ref{fig:energy_eg_square}.

\begin{figure}[ht]
	\centering 
	\includegraphics[width=1\linewidth]{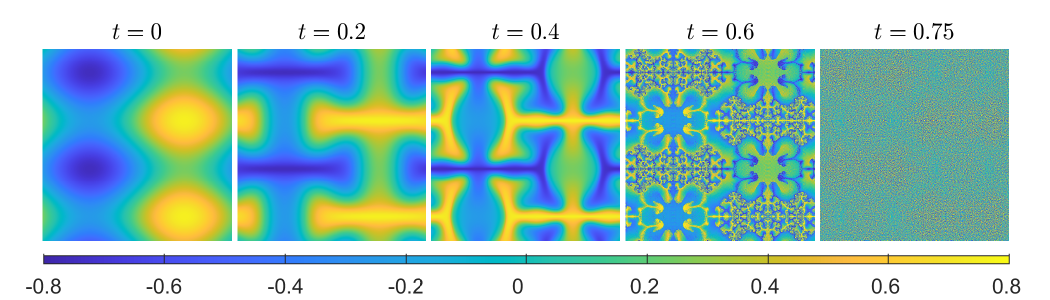} 
	\caption{Evolution of the scalar field in $[-1, 1]^2$ with periodic boundaries under fixed energy constraint \eqref{eq:fixed energy} and $U=1$, the initial condition is \eqref{eq:energy_theta}.
	} 
	\label{fig:energy noeven comparison}	 
      \end{figure} 
  
Scalar evolutions exhibit significant differences in Figures \ref{fig:energy_eg_square} and \ref{fig:energy noeven comparison} although they both start with the same initial scalar distribution. Specifically, the scalar field under no-flux boundary conditions displays less filamentation and smaller-scale structures compared to that with periodic boundaries. Furthermore, a more prominent periodic and self-similar structure can be seen in Figure \ref{fig:energy noeven comparison}. This discrepancy is attributed to the no-flux condition that limits transport across boundaries and thereby constrains the stretching and folding of the scalar field. Moreover, upon closer inspection of the scalar field at $t=0.9$ in Figure \ref{fig:energy_eg_square}, it becomes evident that the scalar field near the left and right boundaries is less well- mixed compared to the central region of the domain. By contrast, the periodic boundaries allow the scalar field to wrap around from one side of the domain to the other, resulting in a more uniform mixing degree across the entire domain as shown in Figure 8. The mix norms depicted in panel (a) of Figure 9 seem to reinforce this observation in that the curve corresponds to periodic boundary conditions exhibits a faster decay, especially in the later stages, compared to that for a no-flux boundary.

 \begin{figure}[ht] 
 \centering 
 \subfigure[]{ 
   \includegraphics[width=0.418\linewidth]{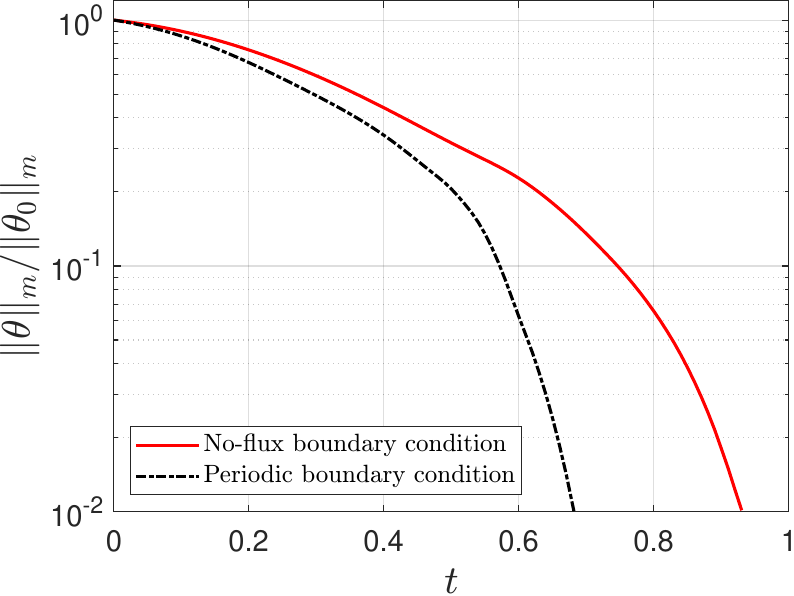} } 
 \subfigure[]{ 
   \includegraphics[width=0.46\linewidth]{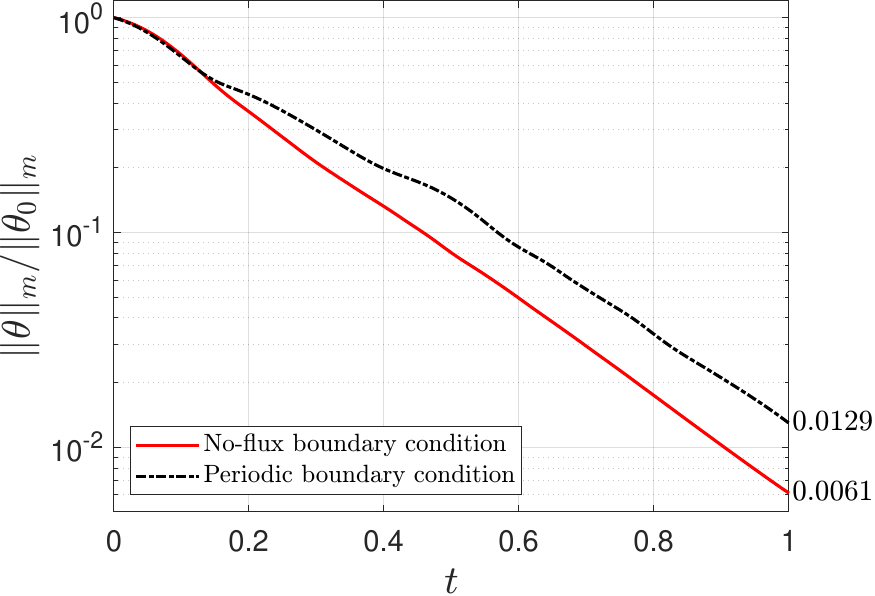} 
}  
  \hfill 
  \caption[]{ Semi-logarithmic plot of mix norms under two boundary conditions for the initial condition provided in \eqref{eq:energy_theta} with  (a)  fixed energy constraint  and (b) fixed enstrophy constraint.} 
	\label{fig:noeven norm comparison} 
\end{figure} 

After examining the previous example, one might anticipate that the no-flux boundary condition consistently reduces mixing efficiency compared to the periodic case. However, the result is quite different when considering the fixed enstrophy constraint. We conducted simulations starting with the initial condition \eqref{eq:energy_theta} under the fixed enstrophy constraint ($\tau^{-1}=15$) for both types of boundary conditions. As illustrated in panel (b) of Figure \ref{fig:noeven norm comparison}, it is evident that the mix norm decreases more rapidly under the no-flux boundary condition compared to the periodic case. 

\begin{figure} 
	\centering 
	\includegraphics[width=1\linewidth]{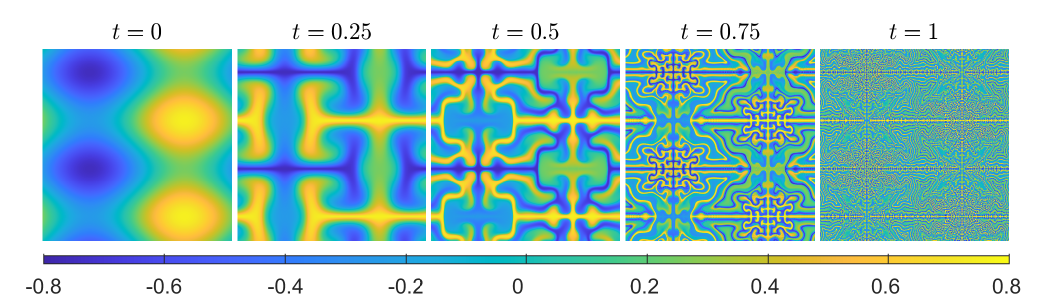} 
	\caption{Evolution of the scalar field in $[-1, 1]^2$ with periodic boundaries under fixed enstrophy constraint \eqref{eq:fixed enstrophy} and $\tau^{-1}=15$, the initial condition is \eqref{eq:energy_theta}.} 
	\label{fig:enstrophy_periodic}	 
      \end{figure}
      
Figure \ref{fig:enstrophy_periodic} illustrates the scalar evolution in the periodic case, and the no-flux situations are already presented in Figure \ref{fig:enstrophy square}. Unsurprisingly, the periodic boundary condition permits flow across the boundary, resulting in a periodic structure observed in the scalar field. In contrast, the no-flux boundary condition restricts flow across the edges, inducing circulation in the flow. This enforces the flow distribute more energy in larger scales.  In turn, it enhances scalar transport in larger scales and leads to a more well-mixed scalar field at $t=1$ compared to the periodic case. 
 
After testing a wide range of initial conditions,  we observe that periodic boundaries tend to induce a faster decay in mix norm compared to no-flux conditions under the fixed energy constraint, while the comparison is reversed for the fixed enstrophy constraint.  A more thorough justification for this result will be discussed in future work.

\section{Conclusion and future work}\label{sec:Discussion}

This paper investigates the optimal stirring strategy to maximize the instantaneous decay rate of the mix norm of a passive scalar field.  We demonstrate that the method used for periodic domains \cite{lin2011optimal} can be generalized for domains with general shapes and a no-flux boundary. The explicit expression for the optimal flow is derived under a fixed energy constraint, as provided in equation \eqref{eq:optimal energy}.  We establish that under the fixed enstrophy constraint the flow \eqref{eq:optimal enstrophy} is the optimal stirrer for a rectangular region, while it serves as a quasi-optimal strategy for other domains. Numerical simulations indicate that the mix norm of the scalar field advected by either of the optimal stirring strategies \eqref{eq:optimal energy} and \eqref{eq:optimal enstrophy} decays exponentially, indicating their mixing effectiveness in different geometry and constraint settings. 
 
Additionally we report two empirical findings based on numerical evidence.  First, the decay rate of the scalar mix norm is insensitive to the initial distribution.  Second, the domain geometry has a significant impact on the decay rate via the smallest, non-zero eigenvalue of the Laplace operator that we denote by $\lambda_{1}$. Specifically, a smaller $\lambda_{1}$ indicates a slower decay rate of the mix norm. 

Finally we explore the effect of boundary conditions on optimal stirring in a rectangular region. Theoretical and numerical results agree that when the initial condition is even, both no-flux and periodic boundary conditions yield the same optimal strategy. In contrast, for general, non-even initial distributions the optimal strategy differs for periodic and non-periodic boundaries.  In particular, simulations suggest that under a fixed energy constraint periodic boundaries result in generally faster mix norm decay and therefore a more pronounced mixture compared to no-flux boundaries.  By contrast, under a fixed enstrophy constraint no-flux boundary conditions tend to entail an optimal stirring that produces faster mixing. 

There are several possibilities for future research.  While this paper concentrates on maximizing the instantaneous decay rate of the mix norm to derive a local-in-time strategy, we will explore global optimization with negative Sobolev norms as demonstrated in \cite{Miles2018} for a reduced advection-diffusion model.  Alternatively, we look forward to further extending the methodology adopted here to other dynamical phenomena including advection-diffusion equation \cite{foures2014optimal} and advection-Cahn–Hilliard equation \cite{feng2020phase}.  Furthermore, optimal stirring with more general boundary conditions is practically intriguing such as boundaries with prescribed, non-zero scalar fluxes \cite{thiffeault2021nonuniform}.

\section{Acknowledgments} 
Sirui Zhu and Zhi Lin were supported by the National Natural Science Foundation of China (Grant Nos. 12071429 and 12090020).

\section{Appendix} 
\subsection{Norm equivalence}\label{sec:Equivalence} 
Mathematically, two norms $\Vert \cdot \Vert_{a}, \Vert \cdot \Vert_{b}$ are considered equivalent if positive constants $c_1$ and $c_2$ exist, such that for any vector or function in the vector space: 
\begin{equation} 
c_1\Vert \cdot \Vert_{b}\leq \Vert \cdot \Vert_{a}\leq c_2\Vert \cdot \Vert_{b}. 
\end{equation} 
Before proving the equivalence between the mix norm (\ref{def:mix norm}) and the $H^{-1}$ norm, we first present an alternative definition of the $H^{-1}$ norm in the following proposition: 
\begin{proposition}\label{pro:H-1 representation} 
For any function \(f(\mathbf{x}) \in H^{1}(\Omega)\), the $H^{-1}$ norm can be computed as: 
\begin{equation} 
\Vert f\Vert_{H^{-1}}=\frac{\langle f, f\rangle}{\Vert f \Vert_{H^{1}}}. 
\end{equation}	 
\end{proposition}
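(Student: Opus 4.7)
The plan is to establish the equality via two complementary bounds extracted from the variational definition (\ref{eq:H-q}) of the $H^{-1}$ norm.

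For the lower bound $\|f\|_{H^{-1}} \ge \langle f, f\rangle / \|f\|_{H^1}$, I would simply substitute the admissible test function $g = f$, which is legitimate because $f \in H^1(\Omega)$ by hypothesis, into the supremum defining $\|f\|_{H^{-1}}$. This produces the lower bound at once, with no further computation.

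For the matching upper bound I would invoke a Cauchy--Schwarz / Riesz-representation argument in $H^1$. Concretely, I would view $L_f(g) := \langle f, g\rangle$ as a bounded linear functional on $H^1$ and seek the unique $h \in H^1$ for which $L_f(g) = (h, g)_{H^1}$ for all $g \in H^1$; by the Riesz theorem the supremum in (\ref{eq:H-q}) is then attained at $g = h$, giving $\|f\|_{H^{-1}} = \|h\|_{H^1}$. Testing the identity against smooth $g$ and integrating by parts shows that $h$ solves the elliptic Neumann problem
$$
(I - \Delta) h = f \quad\text{in}~~\Omega, \qquad \nabla h \cdot \mathbf{n} = 0 \quad\text{on}~~\partial\Omega,
$$
with any resulting boundary terms vanishing by the no-flux data already used throughout Section~\ref{sec:bounds}. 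The remaining task is to match $\|h\|_{H^1}$ with $\langle f, f\rangle/\|f\|_{H^1}$, using the paper's normalized inner-product conventions (the factor $1/|\Omega|$ appearing in both $\langle \cdot, \cdot\rangle$ and $\|\cdot\|_{H^1}$) and the identity $\|h\|_{H^1}^2 = \langle f, h\rangle$ that follows from testing the elliptic equation against $h$.

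The main obstacle is this last reconciliation step. In the generic Hilbert-space picture the Riesz representative $h$ is not a scalar multiple of $f$ unless $f$ is itself an eigenfunction of $-\Delta$ under Neumann conditions, so proving the identity for \emph{every} $f \in H^1$ hinges on a nontrivial algebraic manipulation that exploits the specific normalization adopted in Section~2.2 together with the Neumann boundary data. If that manipulation goes through, the two bounds coincide and the proposition is proved; if it does not, the statement should be read as a sharp Cauchy--Schwarz inequality, with $g = f$ always furnishing the indicated ratio as a lower bound on $\|f\|_{H^{-1}}$, which is still sufficient for the subsequent norm-equivalence argument in Appendix~\ref{sec:Equivalence}.
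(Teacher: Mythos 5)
Your lower bound (substituting $g=f$ into the supremum) is exactly the paper's first step. For the upper bound, however, you and the paper part ways, and your analysis is the more trustworthy one. The paper argues: by Cauchy--Schwarz $\langle f,g\rangle\le\Vert f\Vert_{L^2}\Vert g\Vert_{L^2}$ with equality iff $g=kf$, and then concludes $\langle f,g\rangle/\Vert g\Vert_{H^1}\le\Vert f\Vert_{L^2}^2/\Vert f\Vert_{H^1}$ for \emph{all} $g\in H^1$. That step is a non sequitur: it silently replaces the $g$-dependent factor $\Vert g\Vert_{L^2}/\Vert g\Vert_{H^1}$ by $\Vert f\Vert_{L^2}/\Vert f\Vert_{H^1}$, i.e.\ it assumes the ratio $\langle f,g\rangle/\Vert g\Vert_{H^1}$ is maximized at $g\propto f$, which is precisely what needs to be proved. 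What Cauchy--Schwarz actually gives is only $\langle f,g\rangle/\Vert g\Vert_{H^1}\le\Vert f\Vert_{L^2}$, a weaker (larger) bound since $\Vert f\Vert_{H^1}\ge\Vert f\Vert_{L^2}$.

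Your Riesz-representation computation pinpoints why the gap cannot be closed: the maximizer is $h$ with $(I-\Delta)h=f$ and $\partial_{\mathbf n}h=0$, so $\Vert f\Vert_{H^{-1}}=\Vert h\Vert_{H^1}$, and this coincides with $\langle f,f\rangle/\Vert f\Vert_{H^1}$ only when $h\propto f$, i.e.\ when $f$ is a Neumann eigenfunction. A concrete counterexample to the stated equality: take $f=\phi_1+\phi_2$ with $\phi_1,\phi_2$ orthonormal Neumann eigenfunctions of $-\Delta$ with distinct eigenvalues $\lambda_1\ne\lambda_2$; then $\Vert f\Vert_{H^{-1}}^2=(1+\lambda_1)^{-1}+(1+\lambda_2)^{-1}$ while the right-hand side squared is $4/(2+\lambda_1+\lambda_2)$, and the former strictly exceeds the latter by AM--HM. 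So the proposition holds only as the one-sided bound $\Vert f\Vert_{H^{-1}}\ge\langle f,f\rangle/\Vert f\Vert_{H^1}$, exactly as your fallback reading suggests. One caveat on your last remark: the appendix's norm-equivalence argument actually invokes the \emph{upper}-bound direction of the proposition (to get $\Vert\theta\Vert_{H^{-1}}\le\Vert\theta\Vert_m$), so it is not rescued by the lower bound alone; that inequality must instead be obtained directly, e.g.\ from $\langle\theta,g\rangle=-\langle\nabla\varphi,\nabla g\rangle\le\Vert\nabla\varphi\Vert_{L^2}\Vert g\Vert_{H^1}$.
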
 
\begin{proof} 
 Using the definition of $H^{-1}$ norm provided in equation \eqref{eq:H-q}, we have 
\begin{equation}\label{eq:left min} 
\Vert f\Vert_{H^{-1}}=\sup_{g\in H^{1}}\frac{\langle f, g\rangle}{\Vert g\Vert_{H^{1}}}\geq\frac{\langle f, f\rangle}{\Vert f \Vert_{H^{1}}}, 
\end{equation} 
due to the supremum over $g$.  By the Cauchy-Schwarz inequality, $\langle f, g\rangle\leq  \Vert f\Vert_{L^2}\Vert g\Vert_{L^2}$, where the equality holds if and only if $g$ is a constant multiple of $f$, i.e., \(g = kf\) for almost every $\mathbf{x}$ in \(\Omega\).    
Therefore, for all  $g\in H^{1} (\Omega)$, we have 
\begin{equation}\label{eq:right min} 
\begin{aligned} 
\frac{\langle f, g \rangle}{\Vert g \Vert_{H^1}}  \leq \frac{\Vert f\Vert_{L^2}\Vert f\Vert_{L^2}}{\Vert f \Vert_{H^1}}=\frac{\langle f, f\rangle}{\Vert f \Vert_{H^{1}}}. 
\end{aligned} 
\end{equation} 
Combining equation \eqref{eq:left min} and \eqref{eq:right min}, we have shown that 
\begin{equation} 
\begin{aligned} 
\Vert f \Vert_{H^{-1}} =  \frac{\langle f, f \rangle}{\Vert f \Vert_{H^1}}. 
\end{aligned} 
\end{equation}	This completes the proof. 
\end{proof} 
 
Subsequently, we articulate the equivalence between the mix norm and the $H^{-1}$ norm. 
\begin{proposition} 
For any function \( \theta(\mathbf{x}) \in H^{-1} (\Omega) \), the norms \( \Vert \theta \Vert_{H^{-1}} \) and \( \Vert \theta \Vert_{m} \) are equivalent.	 
\end{proposition}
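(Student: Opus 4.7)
The plan is to establish the equivalence by proving the two directional inequalities separately, using integration by parts together with the no-flux boundary condition satisfied by $\varphi=\Delta^{-1}\theta$, and invoking the Poincar\'e inequality.

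First I would prove $\Vert\theta\Vert_{H^{-1}}\leq\Vert\theta\Vert_{m}$. For an arbitrary test function $g\in H^{1}(\Omega)$, I would write $\langle\theta,g\rangle=\langle\Delta\varphi,g\rangle$ and integrate by parts. The boundary term vanishes because $\nabla\varphi\cdot\mathbf{n}=0$ on $\partial\Omega$, leaving $\langle\theta,g\rangle=-\langle\nabla g,\nabla\varphi\rangle$. By Cauchy--Schwarz and the trivial bound $\Vert\nabla g\Vert_{L^{2}}\leq\Vert g\Vert_{H^{1}}$, this yields
\begin{equation*}
\frac{|\langle\theta,g\rangle|}{\Vert g\Vert_{H^{1}}}\leq\Vert\nabla\varphi\Vert_{L^{2}}=\Vert\theta\Vert_{m},
\end{equation*}
and taking the supremum over $g$ gives $\Vert\theta\Vert_{H^{-1}}\leq\Vert\theta\Vert_{m}$, so one may choose $c_{1}$ below equal to $1$.

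Next I would prove the reverse inequality $\Vert\theta\Vert_{m}\leq c_{2}\Vert\theta\Vert_{H^{-1}}$. Integration by parts, again killing the boundary term via the no-flux condition, gives the identity
\begin{equation*}
\Vert\theta\Vert_{m}^{2}=\Vert\nabla\varphi\Vert_{L^{2}}^{2}=-\langle\varphi,\theta\rangle.
\end{equation*}
Since $\varphi\in H^{1}(\Omega)$, the dual characterization of the $H^{-1}$ norm in \eqref{eq:H-q} furnishes $-\langle\varphi,\theta\rangle\leq\Vert\theta\Vert_{H^{-1}}\Vert\varphi\Vert_{H^{1}}$. It then remains to control $\Vert\varphi\Vert_{H^{1}}$ in terms of $\Vert\nabla\varphi\Vert_{L^{2}}=\Vert\theta\Vert_{m}$, which I would do by fixing the additive constant in the Neumann problem \eqref{eq:laplace no flux} by selecting the mean-zero representative of $\varphi$ and applying the Poincar\'e inequality to obtain $\Vert\varphi\Vert_{L^{2}}\leq C_{P}\Vert\nabla\varphi\Vert_{L^{2}}$, and hence $\Vert\varphi\Vert_{H^{1}}\leq\sqrt{1+C_{P}^{2}}\,\Vert\theta\Vert_{m}$. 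Substituting back and dividing through by $\Vert\theta\Vert_{m}$ gives $\Vert\theta\Vert_{m}\leq\sqrt{1+C_{P}^{2}}\,\Vert\theta\Vert_{H^{-1}}$.

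The main obstacle is the seemingly innocuous but delicate point in the second step: the solution of the Neumann problem is only unique up to an additive constant, so the $H^{1}$ bound on $\varphi$ fails without first selecting a specific representative. The correct fix is to pick the zero-mean representative of $\varphi$, which is legitimate because the standing assumption $\overline{\theta}=0$ ensures the compatibility condition for \eqref{eq:laplace no flux} and because the mix norm $\Vert\nabla\varphi\Vert_{L^{2}}$ itself is insensitive to this choice. Once that subtlety is addressed, the Poincar\'e constant $C_{P}$ (which depends only on $\Omega$) delivers the constant $c_{2}=\sqrt{1+C_{P}^{2}}$, completing the norm equivalence with $c_{1}=1$ and $c_{2}=\sqrt{1+C_{P}^{2}}$.
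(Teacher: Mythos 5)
Your proof is correct, and the second inequality ($\Vert\theta\Vert_{m}\leq\sqrt{1+C_{P}^{2}}\,\Vert\theta\Vert_{H^{-1}}$) is essentially the paper's argument in a lightly repackaged form: the paper bounds the supremum in \eqref{eq:H-q} from below by inserting the single test function $g=-\varphi$ and invoking the Poincar\'e inequality, which is exactly your duality bound $-\langle\varphi,\theta\rangle\leq\Vert\theta\Vert_{H^{-1}}\Vert\varphi\Vert_{H^{1}}$ followed by $\Vert\varphi\Vert_{H^{1}}\leq\sqrt{1+C_{P}^{2}}\,\Vert\nabla\varphi\Vert_{L^{2}}$; both yield the same constant. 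The first inequality is where you genuinely diverge. The paper routes it through an auxiliary variational representation $\Vert f\Vert_{H^{-1}}=\langle f,f\rangle/\Vert f\Vert_{H^{1}}$ (its Proposition \ref{pro:H-1 representation}) together with the identity $\Vert\theta\Vert_{L^{2}}^{2}=-\int_{\Omega}\nabla\theta\cdot\nabla\varphi\,\mathrm{d}\mathbf{x}$, an argument that implicitly requires $\theta\in H^{1}(\Omega)$ and leans on the delicate ``$\leq$'' half of that representation. You instead integrate by parts against an \emph{arbitrary} $g\in H^{1}(\Omega)$, obtain $\langle\theta,g\rangle=-\langle\nabla g,\nabla\varphi\rangle$ from the no-flux condition on $\varphi$, and conclude by Cauchy--Schwarz and $\Vert\nabla g\Vert_{L^{2}}\leq\Vert g\Vert_{H^{1}}$ before taking the supremum. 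This is cleaner and strictly more general: it needs only $\theta\in L^{2}(\Omega)$ with zero mean (so that \eqref{eq:laplace no flux} is solvable), bypasses the auxiliary proposition entirely, and still delivers the sharp constant $c_{1}=1$. Your explicit handling of the additive-constant ambiguity in the Neumann problem --- fixing the mean-zero representative so that Poincar\'e applies --- is a point the paper glosses over and is worth keeping.
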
 
\begin{proof} 
  First, we aim to show $\Vert \theta\Vert_{H^{-1}} \leq \Vert \theta \Vert_{m}$.  Using the integration by parts and  divergence theorem, we have 
\begin{equation} 
\begin{aligned} 
 \int_{\Omega}\theta^{2}\mathrm{d}\mathbf{x}&=\int_{\Omega}\theta\Delta\varphi\mathrm{d}\mathbf{x}=\int_{\Omega}\left(\nabla\cdot(\theta\nabla\varphi)-\nabla\theta\cdot\nabla\varphi\right)\mathrm{d}\mathbf{x}\\ 
&=-\int_{\Omega}\nabla\theta\cdot\nabla\varphi\mathrm{d}\mathbf{x}+\int_{\partial\Omega}\theta\nabla\varphi\cdot\mathbf{n}\mathrm{d}\mathbf{x}=-\int_{\Omega}\nabla\theta\cdot\nabla\varphi\mathrm{d}\mathbf{x}, 
\end{aligned} 
\end{equation} 
where $\varphi=\Delta^{-1}\theta$ and the inverse Laplace operator $\Delta^{-1}$ is defined in equation \eqref{eq:laplace no flux}.  Applying Cauchy-Schwarz inequality to the  above equation yields $\Vert \theta\Vert_{L^{2}}^{2}\leq \Vert \nabla \theta\Vert_{L^{2}}\Vert \nabla \varphi \Vert_{L^{2}}=\Vert \nabla \theta\Vert_{L^{2}} \lVert \theta\rVert_{m} $. Therefore,  we have 
\begin{equation} 
\begin{aligned} 
\Vert \theta\Vert_{H^{-1}}=\frac{\langle \theta, \theta\rangle}{\Vert \theta \Vert_{H^{1}}}\leq \frac{\Vert \nabla \theta\Vert_{L^{2}} \lVert \theta\rVert_{m}}{\Vert  \theta\Vert_{H^{1}}} \leq  \lVert \theta\rVert_{m}, 
\end{aligned} 
\end{equation} 
where the first equality follows from Proposition \ref{pro:H-1 representation}, the last inequality  $\frac{\Vert \nabla \theta\Vert_{L^{2}} }{\Vert  \theta\Vert_{H^{1}}} \leq 1$ follows from the definition of the $H^{1}$ norm.

Next, we aim to show $\Vert \theta\Vert_{H^{-1}} \geq c_{1} \Vert \theta \Vert_{m}$ for a constant $c_{1}$.   From the definition of $H^{-1}$ norm, we have 
\begin{equation}\label{eq:equivalence left} 
\begin{aligned} 
\Vert \theta \Vert_{H^{-1}} = \sup_{g\in H^{1}}\frac{\langle \theta, g\rangle}{\Vert g\Vert_{H^{1}}}=\sup_{g\in H^{1}}\frac{\langle \theta, g\rangle}{\sqrt{\lVert g\rVert_{L_{2}}^{2}+\lVert \nabla g\rVert_{L_{2}}^{2} }} \geq \sup_{g\in H^{1}}\frac{\langle \theta, g\rangle}{\sqrt{(1+C^2)}\Vert\nabla g\Vert_{L^{2}} }, 
\end{aligned} 
\end{equation} 
which follows the Poincar\'e inequality which states that  \(\Vert g\Vert_{L^{2}} \leq C\Vert \nabla g\Vert_{L^2}\) for a constant $C$.  Taking $g=-\varphi =- \Delta^{-1}\theta$ in above equation leads to 
\begin{equation}\label{eq:equivalence right} 
\begin{aligned} 
\Vert \theta\Vert_{H^{-1}} \geq \frac{\langle \theta,-\varphi\rangle}{\sqrt{1+C^2}\Vert \nabla\varphi\Vert_{L^{2}}}=\frac{\langle \nabla\varphi, \nabla\varphi\rangle}{\sqrt{1+C^2} \Vert \nabla\varphi\Vert_{L^{2}}}= \frac{1}{\sqrt{1+C^2}} \Vert\theta\Vert_{m},  
\end{aligned} 
\end{equation} 
where the equality above is deduced from $\int_{\Omega}|\nabla\varphi|^{2}\mathrm{d}\mathbf{x}=\int_{\Omega}\left(\nabla(\varphi\nabla\varphi)-\varphi\Delta\varphi\right)\mathrm{d}\mathbf{x}=-\int_{\Omega}\varphi\theta\mathrm{d}\mathbf{x}$. Combining equation \eqref{eq:equivalence left} and \eqref{eq:equivalence right}, we have 
\begin{equation} 
\Vert\theta\Vert_{H^{-1}}\leq\Vert\theta\Vert_{m}\leq\sqrt{1+C^2}\Vert\theta\Vert_{H^{-1}}. 
\end{equation}	 
We establish the equivalence between the \(H^{-1}\) and \(m\) norms, which  completes  the proof.	 
\end{proof}

\subsection{Even extension for the rectangular domain} 
\label{sec:Even extension for the square domain} 
This section outlines the procedure for demonstrating that the boundary integral term in \eqref{eq:dJpuv} is zero for a rectangular domain, thereby establishing that equation \eqref{eq:optimal enstrophy} represents the optimal flow under the fixed enstrophy constraint. 
 
Without loss of generality, we focus on a unit square region $\Omega=\left\{ (x,y)| x,y \in [0,1] \right\}$. The domain after the even extension is $\Omega=\left\{ (x,y)| x,y \in [-1,1] \right\}$. The even extension of the function $f$ is defined as 
\begin{equation} 
\tilde{f} (x,y) = 
\begin{cases} 
  f (x,y) & 0\leq x\leq 1, 0\leq y\leq 1\\ 
  f(-x,y) &-1\leq x<0, 0\leq y\leq 1\\ 
  f(x,-y)&0\leq x\leq 1, -1\leq y<0\\ 
  f(-x,-y)&-1\leq x<0, -1\leq y<0\\   
\end{cases}. 
\end{equation} 
We denote $\Vert\theta\Vert_{m,\Omega}=\lVert \nabla \Delta_{\Omega}^{-1}\theta\rVert_{L^{2}}$ as the mix norm on the domain $\Omega$, where $\varphi=\Delta_{\Omega}^{-1}\theta$ solves 
\begin{equation} 
\begin{aligned} 
\Delta\varphi&=\theta \quad \text{in}~~\Omega, \quad \left. \partial_{\mathbf{n}} \varphi \right|_{\mathbf{x} \in \partial \Omega }=0. 
\end{aligned}	 
\end{equation}     
We denote $\Vert\theta\Vert_{m,\tilde{\Omega}}=\lVert \nabla \Delta_{\tilde{\Omega}}^{-1}\theta\rVert_{L^{2}}$ as the mix norm on the extended domain $\tilde{\Omega}$, where $\varphi=\Delta_{\Omega}^{-1}\theta$ solves  
\begin{equation} 
\begin{aligned} 
\Delta\varphi&=\theta \quad \text{in}~~\tilde{\Omega}, \quad \varphi \text{ is periodic}. 
\end{aligned}	 
\end{equation} 
The subsequent proposition is crucial to the proof. 
\begin{proposition} 
 Inverting Laplace operator commutes with the even extension operation: $\widetilde{\Delta_{\Omega}^{-1} \theta}= \Delta_{\tilde{\Omega}}^{-1}\tilde{\theta}$. 
\end{proposition}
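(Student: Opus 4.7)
The plan is to show that the function $\psi := \widetilde{\Delta_{\Omega}^{-1}\theta}$, obtained by first inverting the Neumann Laplacian on $\Omega$ and then performing the even extension, itself satisfies the defining boundary value problem on $\tilde{\Omega}$ for the periodic inverse Laplacian applied to $\tilde{\theta}$. By uniqueness of that problem (modulo an additive constant, which can be fixed by imposing zero spatial mean on both sides), this identifies $\psi$ with $\Delta_{\tilde{\Omega}}^{-1}\tilde{\theta}$.

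First I would work pointwise in each of the four open sub-squares of $\tilde{\Omega}=[-1,1]^2$. Write $\varphi = \Delta_{\Omega}^{-1}\theta$. In $[0,1]^2$, $\psi = \varphi$ by definition, so $\Delta \psi = \theta = \tilde{\theta}$ there. In the reflected sub-square $[-1,0]\times[0,1]$, $\psi(x,y) = \varphi(-x,y)$, and a direct application of the chain rule shows $\Delta\psi(x,y) = (\Delta\varphi)(-x,y) = \theta(-x,y) = \tilde{\theta}(x,y)$; the sign flip from the single reflection cancels when the derivative is taken twice. The remaining two sub-squares are handled identically. Thus $\Delta\psi = \tilde{\theta}$ in the classical sense on the interiors of the four pieces.

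The crux of the argument is to verify that no singular contributions to the weak Laplacian appear along the reflection lines $\{x=0\}$ and $\{y=0\}$, and that $\psi$ glues into a periodic function on $\partial\tilde{\Omega}$. Continuity of $\psi$ across $\{x=0\}$ and $\{y=0\}$ is automatic from the evenness of the extension. For $C^1$ matching across $\{x=0\}$, the one-sided limits of $\partial_x\psi$ from $x>0$ and $x<0$ differ by a sign because of the reflection, so they agree if and only if $\partial_x\varphi(0,y)=0$; this is exactly the no-flux boundary condition satisfied by $\varphi$. The same reasoning handles $\{y=0\}$. Consequently $\psi \in H^1(\tilde{\Omega})$ and the jump of its normal derivative across each reflection line vanishes, so integration against an arbitrary periodic test function produces no boundary terms and the identity $\Delta\psi = \tilde{\theta}$ holds in the weak sense on all of $\tilde{\Omega}$. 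Periodicity on the outer boundary $\partial\tilde{\Omega}$ follows from the same mechanism: by the reflection structure $\psi(-1,y)=\varphi(1,y)=\psi(1,y)$, while the normal derivatives $\partial_x\psi(\pm 1,y)$ both reduce to $\partial_x\varphi(1,y)=0$ by no-flux, and likewise for $y$.

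Finally, I would invoke uniqueness. The mean $\tfrac{1}{|\tilde{\Omega}|}\int_{\tilde{\Omega}}\tilde{\theta}\,d\mathbf{x}$ equals $\tfrac{1}{|\Omega|}\int_{\Omega}\theta\,d\mathbf{x}=0$ since even extension preserves averages, so the periodic Poisson problem $\Delta\psi=\tilde{\theta}$ is solvable and its solution is unique up to an additive constant. Normalizing both $\Delta_{\Omega}^{-1}\theta$ and $\Delta_{\tilde{\Omega}}^{-1}\tilde{\theta}$ to have zero mean yields $\widetilde{\Delta_{\Omega}^{-1}\theta}=\Delta_{\tilde{\Omega}}^{-1}\tilde{\theta}$. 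The main technical obstacle is the second step: confirming that the weak Laplacian picks up no distributional contribution from the reflection seams. This is precisely where the no-flux boundary condition on $\Omega$ is indispensable, since without it the even extension would only be Lipschitz and the reflected pieces would not glue into a global $H^1$ solution of Poisson's equation.
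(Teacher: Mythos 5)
Your proof is correct, but it takes a genuinely different route from the paper's. The paper argues spectrally: it expands $\theta$ in the Neumann eigenfunctions $\phi_{n,m}=2\cos(n\pi x)\cos(m\pi y)$ of the square, writes $\Delta_{\Omega}^{-1}\theta=-\sum_{(n,m)\neq(0,0)} a_{n,m}\phi_{n,m}/\lambda_{n,m}$, and observes that each $\phi_{n,m}$ is already even and periodic on the doubled domain, so $\widetilde{\Delta_{\Omega}^{-1}\theta}$ and $\Delta_{\tilde{\Omega}}^{-1}\tilde{\theta}$ are given by the identical series. You instead verify directly that the reflected function $\psi$ solves the periodic Poisson problem: the Laplacian is invariant under reflection on each open sub-square, the Neumann condition forces the one-sided normal derivatives to agree across the seams so the weak Laplacian acquires no singular part, periodicity holds on $\partial\tilde{\Omega}$, and uniqueness of the zero-mean periodic problem closes the argument. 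Each approach has its merits: the paper's is shorter and makes transparent why the identity is special to rectangles (the Neumann eigenfunctions are exactly the even periodic eigenfunctions of the doubled torus, a coincidence that fails for general domains), whereas yours isolates mechanistically where the no-flux condition is used (cancellation of the seam jump) and would transfer to any domain for which reflection is a symmetry. One small imprecision in your closing remark: even without the no-flux condition the even extension of an $H^2$ function is still $H^1$, since continuity of traces across the seam holds by construction; what fails is that the distributional Laplacian then picks up a surface term proportional to $\partial_{\mathbf{n}}\varphi$ supported on the seam, so $\psi$ ceases to solve the Poisson equation --- your conclusion is right, but the obstruction is the singular part of $\Delta\psi$, not a loss of $H^1$ regularity.
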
 
\begin{proof}	 
  The eigenfunctions $\phi_{n,m}=2\cos n(\pi x) \cos (m\pi y)$ and corresponding eigenvalues $\lambda_{n,m}= (n^{2}+m^{2})\pi^{2}$ of the Laplace operator  satisfy 
 \begin{equation} 
\begin{aligned} 
\Delta\phi_{n,m}&=-\lambda_{n,m} \phi_{n,m} \quad \text{in}~~\Omega, \quad \left. \partial_{\mathbf{n}} \phi_{n,m} \right|_{\mathbf{x} \in \partial \Omega }=0, 
\end{aligned} 
\end{equation} 
where $n,m$ are nonnegative integers not simultaneously zero.  In addition, $\lambda_{0,0}=0$, $\phi_{0,0}=1$. The scalar $\theta$ admits the following  series expansion: $\theta= \sum\limits_{n,m}^{} a_{n,m}\phi_{n,m}$, where $a_{n,m}=\left\langle \theta, \phi_{n,m} \right\rangle$. Since the scalar is assumed to have zero mean, $a_{0,0}=0$. Therefore, we have  
\begin{equation} 
\varphi=\Delta_{\Omega}^{-1}\theta=\sum\limits_{(n,m)\neq (0,0)}^{} \frac{-a_{n,m} \phi_{n,m}}{\lambda_{n,m}}. 
\end{equation} 
 Due to the even and periodic nature of $\phi_{n,m}$ on the extended domain $\tilde{\Omega}$, the extension $\tilde{\varphi}$ shares the exact expression with $\varphi$: 
 \begin{equation} 
\tilde{\varphi}=\widetilde{\Delta_{\Omega}^{-1}\theta}=\sum\limits_{(n,m)\neq (0,0)}^{} \frac{-a_{n,m} \phi_{n,m}}{\lambda_{n,m}}. 
 \end{equation} 
Similarly, $\tilde{\theta}= \sum\limits_{(n,m)\neq(0,0)}^{} a_{n,m}\phi_{n,m}$ and $\Delta_{\tilde{\Omega}}^{-1}\tilde{\theta}=\sum\limits_{(n,m)\neq (0,0)}^{} -\frac{a_{n,m} \phi_{n,m}}{\lambda_{n,m}}$. Therefore, $\widetilde{\Delta_{\Omega}^{-1} \theta}= \Delta_{\tilde{\Omega}}^{-1}\tilde{\theta}$.    This completes the proof.  
\end{proof} 
 
Using the same approach, we can demonstrate that differentiation also commutes with the extension operation. Consequently, we can establish the relationship $||\theta||_{m,\Omega}^{2}=||\tilde{\theta}||_{m,\tilde{\Omega}}^{2}$. 
 
With above conclusions, the Lagrange multiplier formalism \eqref{eq:Lagrange multiplier formalism} is equivalent to the following functional defined in the extended domain: 
\begin{equation} 
\begin{aligned} 
&J_{en}(\tilde{\mathbf{u}})=-2\int_{\tilde{\Omega}}\tilde{\mathbf{u}}\cdot \mathscr{P}(\tilde{\theta}\nabla\tilde{\varphi})d\mathbf{x}+\lambda \left(  \int_{\tilde{\Omega}}|\nabla\tilde{\mathbf{u}}|^{2}d\mathbf{x}-\frac{\left| \tilde{\Omega} \right|}{\tau^{2}} \right) \quad \text{on}~\tilde{\Omega}.\\ 
\end{aligned} 
\end{equation} 
Its functional derivative is  
\begin{equation} 
\begin{aligned} 
dJ_{en} (\tilde{\mathbf{u}};\tilde{\mathbf{v}})=-2\int_{\tilde{\Omega}}\tilde{\mathbf{v}}\cdot \left( \Delta \tilde{\mathscr{P}}\Delta_{\tilde{\Omega}}^{-1}(\tilde{\theta}\nabla\tilde{\varphi})+\lambda \Delta \tilde{\mathbf{u}}  \right)\mathrm{d}\mathbf{x}+2\lambda \int_{\partial\tilde{\Omega}} \sum\limits_{i,j=1}^{d}\tilde{v}_{i}n_{j}\partial_{x_{j}}\tilde{u}_{i}\mathrm{d}\mathbf{x}, 
\end{aligned} 
\end{equation} 
where $\tilde{\mathscr{P}} (\mathbf{v})= \mathbf{v}- \nabla \Delta_{\tilde{\Omega}}^{-1} (\mathbf{v})$. The periodicity ensures that the integral along the opposite edge cancels out. Thus, the boundary integral term in the above equation is zero. The optimal stirring field on the extend domain is  
\begin{equation} 
\tilde{\mathbf{u}}_{en}=\frac{1}{\tau}\frac{-\tilde{\mathscr{P}}\Delta_{\tilde{\Omega}}^{-1}(\tilde{\theta}\nabla\tilde{\varphi})}{\langle|\nabla\tilde{\mathscr{P}}\Delta_{\tilde{\Omega}}^{-1}(\tilde{\theta}\nabla\tilde{\varphi})|^2\rangle^{1/2}}, 
\end{equation} 
which is the extension of the flow provided in \eqref{eq:optimal enstrophy}. Therefore, equation \eqref{eq:optimal enstrophy} is the optimal flow under fixed enstrophy constraint in a square domain.  
 
The key to the above argument is the commutativity of the inverse Laplace operator and the even extension operation. Therefore, it is straightforward to show that equation \eqref{eq:optimal enstrophy} is the optimal flow for an arbitrary rectangular domain. However, this commutativity may not be valid for more general domains, and equation \eqref{eq:optimal enstrophy} is a quasi-optimal flow, not the optimal flow in these domains. 
 
\subsection{Numerical methods\label{sec:numerical_methods}} 
This section documents the details of numerical simulations that we use to simulate scalar evolution and to compute the optimal stirring in different settings. 
\subsubsection{Spectral method} 
\label{sec:Spectral method} 
The (pseudo) Fourier spectral method is one of the efficient methods for the simulation in the domains with regular geometries \cite{trefethen2000spectral,boyd2001chebyshev,ding2021enhanced,ding2022determinism}. Therefore, this method is used for the simulation for the square region in this paper. Within this methodology, all functions are approximated using Fourier series, transforming the derivative operation into a multiplication in the spectral space. The computation of nonlinear terms is streamlined through Fast Fourier Transform (FFT)-based convolution, and for more in-depth insights, readers are directed to the referenced textbooks \cite{trefethen2000spectral,boyd2001chebyshev}.

We employ the explicit 4th-order Runge-Kutta method as our time-marching scheme. To enhance efficiency, instead of computing the optimal flow and advecting the scalar field at each time step, we exclusively calculate the optimal flow at time instances that differ by a larger time step size, denoted as $\Delta t$. Upon obtaining the optimal flow, we employ it to advect the scalar field over a time interval of length $\Delta t$ using the Runge-Kutta method with a smaller time step size that satisfies the Courant–Friedrichs–Lewy condition.
 
In the dealiasing process at each time step, an all-or-nothing filter is incorporated into the spectrum. In simulations adhering to the fixed enstrophy constraint, the two-thirds rule is applied, nullifying the upper one-third of the resolved spectrum. However, in simulations with the fixed energy constraint, the velocity expressed in equation \eqref{eq:optimal energy} yields large velocity gradients during the simulation. To ensure numerical stability, the upper half of the spectrum is zeroed at each time step.

Although the Fourier spectral method  is particularly effective in periodic domains, we encounter no-flux boundary conditions in our study. Taking the square domain defined in \eqref{eq:square domain} as an example, we address this challenge by implementing an even extension to establish periodic conditions on the extended domain $\left\{(x,y) | x,y\in [-1, 3]\right\}$. This extension guarantees no-flux boundary conditions on the original domain $\left\{(x,y) | x,y\in [-1, 1]\right\}$. The original domain comprises $513\times513$ grid points, while the extended domain encompasses $1024\times1024$ grid points.

\subsubsection{Finite element method} 
\label{sec:finite element method} 
The finite element method is employed to simulate scalar evolution in non-rectangular regions, utilizing the FreeFEM++ software (\cite{hecht2012new}).  In the simulation, $P2$ elements are employed, which is the set of bivariate polynomials of degrees $\leq 2$ over the triangles. 
 
At each time step, we begin by computing the optimal flow as defined in equations \eqref{eq:optimal energy} or \eqref{eq:optimal enstrophy}. The computation of the optimal flow requires solving equations \eqref{eq:laplace no flux} and \eqref{eq:Leray-Helmholtz projection p}, both of which are Poisson equations with Neumann boundary conditions. These equations have infinite solutions, differing by a constant. To ensure a unique solution, we impose an additional condition: the solution must have a mean of zero. The imposition of the mean zero condition, coupled with the weak form of the Poisson equation (\cite{alberty1999remarks}), results in a linear system solved by the built-in solver in FreeFEM++. A detailed overview of the numerical scheme can be found in \cite{hecht2005freefem++}. 
 
Having obtained the optimal flow, we then advance the solution of equation \eqref{eq:advection equation} using the built-in advection operator in FreeFEM++.

\end{document}